\renewcommand*\subjclass[2][2000]{%
  \def\@subjclass{#2}%
  \@ifundefined{subjclassname@#1}{%
    \ClassWarning{\@classname}{Unknown edition (#1) of Mathematics
      Subject Classification; using '1991'.}%
  }{%
    \@xp\let\@xp\subjclassname\csname subjclassname@#1\endcsname
  }%
}
\newtheorem{theorem}{Theorem}[section]
\newtheorem{lemma}[theorem]{Lemma}
\newtheorem*{lemma*}{Lemma}
\newtheorem{proposition}[theorem]{Proposition}
\newtheorem{corollary}[theorem]{Corollary}
\def\1ton{1,2,\ldots,n}
\def\det{{\rm det}}
\newcommand{\bydef}{\stackrel{{\rm def}}{=\!\!=}}
\newcommand{\onto}{\xrightarrow[]{{}_{\!\!\textnormal{onto}\!\!}}}
\newcommand{\A}{\mathbb{A}}
\newcommand{\X}{\mathbb{X}}
\newcommand{\Y}{\mathbb{Y}}
\theoremstyle{definition}
\theoremstyle{remark}
\newtheorem{remark}{Remark}[section]
\numberwithin{equation}{section}
\newcommand{\abs}[1]{\lvert#1\rvert}
\def\XXint#1#2#3{{\setbox0=\hbox{$#1{#2#3}{\int}$}
\vcenter{\hbox{$#2#3$}}\kern-.5\wd0}}
\def\ge{\geqslant}
\begin{document}

\title[Neohookean deformations of annuli in the Euclidean space]{Neohookean deformations of annuli in the higher dimensional Euclidean space} \subjclass{Primary 31A05;
Secondary 42B30 }


\keywords{Minimizers, Nitsche phenomenon, Annuli}
\author{David Kalaj}
\address{University of Montenegro, Faculty of Natural Sciences and
Mathematics, Cetinjski put b.b. 81000 Podgorica, Montenegro}
\email{davidk@ac.me}

\author{Jian-Feng Zhu}
\address{Jian-Feng Zhu, Department of Mathematics, Shantou University, Shantou, Guangdong 515063,
People's Republic of China
and School of Mathematical Sciences,
Huaqiao University,
Quanzhou 362021, People's Republic of China.
} \email{flandy@hqu.edu.cn}

\begin{abstract}
Let $n>2$ be an integer and assume that
$\mathbb{A}=\{x\in\mathbf{R}^n:1<|x|<R\}$ and
$\A_\ast = \{y  \in  \mathbf{R}^n:  1   < |y| <  R_\ast\}$ be two
annuli in  Euclidean space $\mathbf{R}^n$.
Assume that $\mathcal{F}(\A, \A_\ast)$ (resp. $\mathcal{R}(\A, \A_\ast)$) be the  class  of  all  orientation  preserving (resp. radial) homeomorphisms
$h  : \A \mapsto \A_\ast$ in the Sobolev space $\mathcal{W} ^{1,n}(\A, \A_\ast)$ which keep the boundary circles in the same order.
In this paper, we extended the corresponding results of  Iwaniec and Onninen which was published in {\it Math. Ann.} Vol. 348, 2010.
\end{abstract}

\maketitle
\section{Introduction}
Let
$\mathbb{X}=\{x\in\mathbf{R}^2: r<|x|<R\}$ and  $\mathbb{Y}=\{y\in\mathbf{R}^2: r_*<|y|<R_*\}$
be two concentric annuli of the complex plane.
The mapping problem between $\mathbb{X}$ and $\mathbb{Y}$ by means of harmonic diffeomorphisms raised the J. C. C. Nitsche conjecture, which states that
there is a harmonic diffeomorphism between $\X$ and $\Y$ if and only if
\begin{equation}\label{niin}\frac{R_*}{r_*}\ge \frac{1}{2}\left(\frac{R}{r}+\frac{r}{R}\right)\end{equation}
(see \cite[Theorem 1.4]{nconj}). The theorem can be related to the minimal surfaces. On the other hand the inequality \eqref{niin} is important for the existence of diffeomorphic minimizers of Dirichlet's energy between annuli on the plane  \cite{astala}. For certain generalizations we refer to \cite{memoirs, kalajarxiv, klondon}. It should be noted that some results have been obtained for general doubly connected domains $\X$ and $\Y$ in complex plane and in Riemann surfaces subject to the condition $\mathrm{mod}(\X)\le \mathrm{mod}(\Y)$ (\cite{invent, calculus}).

In \cite{annalen}, Iwaniec and Onninen studied the neohookean energy for the so-called deformations between two annuli in the Euclidean complex plane.
They studied a concrete extremal problem motivated by recent remarkable
relations between Geometric Function Theory  (mappings of finite distortion)
and the Theory of Nonlinear Elasticity (hyperelastic deformations in particular). Both theories are governed by variational principles.

This paper continues to study the same problem, in the space and in the plane but under slightly different circumstances. Here we consider  deformations of bounded spatial annular  domains $\A(r, R)=\{x\in\mathbf{R}^n:r<|x|<R\}$ and  $\A_\ast(r_*, R_*)=\{y\in\mathbf{R}^n: r_\ast<|y|<R_\ast\}$. Let a homogeneous isotropic elastic body in the reference configuration $\A(r, R)$ be deformed into configuration $\A_\ast(r_*, R_*)$.
The general law of hyperelasticity tells us that there exists a stored energy function
$E : \mathbf{R}_+ \times \mathbf{R}_+ \to \mathbf{R}$ that characterizes the elastic and mechanical properties of the
material. The subject of the investigation are orientation preserving homeomorphisms
$h : \A(r, R) \onto
\A_\ast(r_*, R_*)$ of smallest energy;
$$\mathcal{E}_\Phi=\mathcal{E}[h]\bydef \int_{\X}\|Dh\|^n +\Phi(\det Dh)dx$$
called extremal deformations.  Here $\Phi:(0,+\infty)\to (0,+\infty)$ is a positive, convex and three times differentiable function. This is the several dimensional generalisation of the planar case considered by Iwaniec and Astala in \cite{annalen}.

We  consider  the  class $\mathcal{F}(\A, \A_\ast)$ (resp. $\mathcal{R}(\A, \A_\ast)$)  of  all  orientation  preserving (resp. radial) homeomorphisms
$h:\A(r,R)\to \A(r_\ast, R_\ast)$ in the Sobolev space $\mathcal{W} ^{1,n}\bigg(\A(r, R), \A_\ast(r_*, R_*)\bigg)$ which keep the boundary circles in the same order.
This means that $\lim\limits_{|x|\to r} |h(x)| = r$ and
$\lim\limits_{|x|\to R} |h(x)| = R_\ast$.
We minimize $\mathcal{E}[h]=\mathcal{E}_\Phi[h]$ under the assumption that $h$ is a certain homeomorphism that belongs to the class $\mathcal{R}(\A,\A_\ast)$ for $n\ge 2$.
Furthermore, we revise the same problem considered in \cite{annalen} for $n=2$. In fact, we remove the assumption that $\ddot\Phi(0)=\infty$ (cf. \cite[Theorem 1]{annalen}) and consider more general class $\mathcal{F}(\A,\A_\ast)$. Our main result is as follows.
\begin{theorem}\label{omainth}
Let $\Phi\in C^3\big(0, \infty\big)$ be a positive and convex function. Assume also that $\chi=1/\ddot\Phi$ and its derivative extends continuously on $[0,\infty)$ with $\chi(0)=\alpha\ge 0$.  Let $R>r$. Then for $\alpha>0$ there exists a constant $R_\circ\in(r,R)$  that depends on $\Phi$ and $r$ and $R$ such that  the boundary value problem
$$\left\{
  \begin{array}{ll}
    \ddot H(t)=(H-t\dot H)M(t), & \hbox{\text{where $M$ is defined in \eqref{mmm}} ;} \\
    H(r)=r_\ast\ \ \  H(R)=R_\ast & \hbox{}
  \end{array}
\right.$$
admits a unique solution $H=H_{\lambda_\ast}\in C^\infty(a,\infty)$, where $a<r_\ast$ and $\dot H>0$, if and only if $R_\ast\ge R_\circ$. For $\alpha=0$ we have $R_\circ =r_\ast$ and $R_\ast>1$ is arbitrary.

Moreover, for $n\ge 2$ the radial mapping
$h=h_{\lambda_*}$, defined by
$h(x)=H_{\lambda_*}(|x|)\frac{x}{|x|}$,  minimizes the energy function
$\mathcal{E}_\Phi:\mathcal{R}(A,A_*)\to \mathbf{R}$.
In particularly, if $n= 2$, then $h$ minimizes the energy function
$\mathcal{E}_\Phi:\mathcal{F}(A,A_*)\to \mathbf{R}$.
\end{theorem}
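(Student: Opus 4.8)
The plan is to turn this into a one-dimensional variational problem, study the resulting Euler--Lagrange ODE by a shooting argument, and then establish minimality through the Weierstrass field theory of the calculus of variations, followed (for $n=2$) by a symmetrization step that removes the radial restriction.

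First I would pass to polar coordinates. For a radial map $h(x)=H(|x|)\,x/|x|$ the matrix $Dh$ has singular value $\dot H(t)$ in the radial direction and $H(t)/t$ in each of the $n-1$ tangential directions, where $t=|x|$; hence $\det Dh=\dot H\,H^{n-1}/t^{n-1}$ and (for the Hilbert--Schmidt norm) $\|Dh\|^n=(\dot H^2+(n-1)H^2/t^2)^{n/2}$. Integrating over spheres, $\mathcal{E}_\Phi[h]=\omega_{n-1}\int_r^R L(t,H,\dot H)\,dt$ with an explicit Lagrangian $L(t,H,p)=t^{n-1}\big[(p^2+(n-1)H^2/t^2)^{n/2}+\Phi(p\,H^{n-1}/t^{n-1})\big]$. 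Computing $\frac{d}{dt}L_p=L_H$ gives a second-order ODE whose coefficient of $\ddot H$ is $\partial_p^2 L$, which contains the factor $\ddot\Phi$; multiplying through by $\chi=1/\ddot\Phi$ should collapse it to the stated form $\ddot H=(H-t\dot H)M(t)$, with $M$ read off from that computation. It is no accident that $H-t\dot H$ appears: $H-t\dot H=-t^2\frac{d}{dt}(H/t)$ is, up to a factor, the rate of change of the tangential stretch.

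Next, existence and uniqueness of the boundary value problem by shooting. Fix $H(r)=r_\ast$ and, for a parameter $\lambda>0$, let $H_\lambda$ be the maximal solution with $\dot H(r)=\lambda$; standard ODE theory gives $H_\lambda\in C^\infty$ on its interval of existence, smooth dependence on $\lambda$, and (since $\ddot H(t_0)=H(t_0)M(t_0)>0$ whenever $\dot H(t_0)=0$, using $M>0$ and $H>0$) the fact that $\dot H>0$ and $H>0$ persist, so $H_\lambda$ extends past $R$; running the flow backwards and using the continuity of $\chi$ at $0$ shows it stays defined and regular on an interval $(a,\infty)$ with $a<r_\ast$. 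Then I would prove that $\lambda\mapsto H_\lambda(R)$ is continuous and strictly increasing, and identify $R_\circ:=\inf_{\lambda>0}H_\lambda(R)=\lim_{\lambda\to 0^+}H_\lambda(R)$. Here the dichotomy enters through the behaviour of the equation as $p=\dot H\to 0$: if $\alpha=\chi(0)>0$ the limit is a genuine nondegenerate extremal with $H_0(R)=R_\circ>r_\ast$ — the higher-dimensional Nitsche phenomenon — whereas if $\alpha=0$ the limit degenerates to the cylinder $H\equiv r_\ast$, so $R_\circ=r_\ast$ and no obstruction occurs. Monotonicity then yields a unique $\lambda_\ast$ with $H_{\lambda_\ast}(R)=R_\ast$ exactly when $R_\ast\ge R_\circ$.

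Finally, minimality. The extremals $\{H_\lambda\}$ furnish a field foliating a strip $\{(t,H): r\le t\le R,\ H>0\}$ through the graph of $H_{\lambda_\ast}$; let $\Sigma(t,H)$ be the associated slope field. Since $L$ is convex in $p$ (the $\|Dh\|^n$-term is convex in $p$ for $n\ge 2$, and $\Phi$ is convex), the Weierstrass excess $\mathcal W(t,H,\Sigma,p)=L(t,H,p)-L(t,H,\Sigma)-(p-\Sigma)L_p(t,H,\Sigma)\ge 0$, and Hilbert's invariant integral gives, for any radial competitor $h\in\mathcal{R}(\A,\A_\ast)$ with profile $G$, $\mathcal{E}_\Phi[h]-\mathcal{E}_\Phi[h_{\lambda_\ast}]=\omega_{n-1}\int_r^R\mathcal W(t,G,\Sigma,\dot G)\,dt\ge 0$; Sobolev competitors and competitors whose graph leaves the foliated strip are handled by truncation and approximation. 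For $n=2$ one must upgrade from $\mathcal{R}$ to $\mathcal{F}$: writing a general $h\in\mathcal{F}(\X,\Y)$ in polar form $h=\rho e^{i\phi}$, I would introduce free (null) Lagrangians built from $\rho\,\rho_r\,dr\wedge d\theta$ and $\rho_r\,\phi_\theta\,dr\wedge d\theta$, whose integrals over $\X$ are fixed by the boundary data and the degree of $h$, combine them with a pointwise lower bound for $\|Dh\|^2+\Phi(J_h)$, and then average in $\theta$ by Jensen's inequality to dominate $\mathcal{E}_\Phi[h]$ from below by the radial energy of a comparison radial map, to which the previous step applies. I expect the two genuinely delicate points to be the asymptotic analysis of the shooting flow as $\lambda\to 0^+$ that pins down $R_\circ$ and the dichotomy, and the choice of exactly the right free Lagrangian in the $n=2$ reduction so that the boundary and degree terms match up; the remaining steps are reduction, ODE bookkeeping, and standard lower-semicontinuity arguments.
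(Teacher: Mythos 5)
Your reduction to the one--dimensional Lagrangian and the overall shooting outline agree with the paper, but the two places where you are briefest are exactly where the substance lies, and one of them, as written, would fail. For the shooting step you simply assert that $\lambda\mapsto H_\lambda(R)$ is continuous and strictly increasing and that the limits as $\lambda\to 0^+$ and $\lambda\to\infty$ can be identified; for a second--order equation there is no comparison or non--crossing principle, so these claims need a mechanism. The paper's device is the substitution $s=H(t)/t$, $\dot H=v(s)/s$, which converts the Euler--Lagrange equation into a first--order scalar ODE $\dot v=G(v,s)$: uniqueness for that equation forces the ordering $v_\lambda>v_{\lambda'}$ for $\lambda>\lambda'$, boundedness of $v_\lambda$ yields global existence of $H_\lambda$ on an interval $(a,\infty)$ with $a<r$, the ordering transfers to $H_\lambda$ through a first--order comparison lemma, and one reads off $R_\circ=H_0(R)$ together with $H_\lambda(R)\to\infty$ as $\lambda\to\infty$. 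You also need to rewrite the coefficients in terms of $\chi=1/\ddot\Phi$ (the hypothesis is on $\chi$, not on $\ddot\Phi$) to run Picard--Lindel\"of at the degenerate value $\lambda=0$ when $\ddot\Phi(0^+)=\infty$; this is precisely what separates the case $R_\circ=r_*$ ($\alpha=0$) from the Nitsche case $R_\circ>r_*$ ($\alpha>0$). Your statements here are true, but no argument is indicated for them, and the blow--up/monotonicity control is the technical heart of the existence part.

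The minimality step is a genuinely different route from the paper, and it has a concrete gap. The paper argues by the direct method: the integrand $\mathcal{L}(t,H,K)$ is convex in $K=\dot H$ and coercive ($C|\dot H|^n\le\mathcal{L}$), so a minimizer exists in the radial class, regularity theory makes it a classical solution of the Euler--Lagrange equation, and uniqueness of the boundary value problem identifies it with $H_{\lambda_*}$. Your Weierstrass/Hilbert argument instead needs the extremals $\{H_\lambda\}$ to form a field covering the graphs of all competitors, and this is false in exactly the interesting regime: all $H_\lambda$ pass through $(r,r_*)$, and for $\alpha>0$ their union covers only the region on or above the graph of $H_0$ (the extremal with $\dot H(r)=0$), which satisfies $H_0(t)>r_*$ for $t>r$. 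An admissible increasing competitor may stay near $r_*$ for a while and then rise, so its graph lies below $H_0$ and outside the field; Hilbert's invariant integral then gives nothing, and ``truncation and approximation'' does not help because the problem is not lack of regularity but the nonexistence of the field where the competitor lives. Either construct a genuine Mayer field covering the whole admissible region (a central field based at a point $(a',H_{\lambda_*}(a'))$ with $a<a'<r$ only yields a neighborhood of the extremal, hence only a local statement) or replace this step by the paper's direct method. The $n=2$ upgrade to the full class via free Lagrangians and Jensen's inequality is, as in the paper, essentially a citation of Iwaniec--Onninen's argument, and that part of your plan matches the paper's.
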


\begin{remark}
If $r<R$ and $r_\ast<R_\ast$, then
the mapping $f\in \mathcal{F}\left( \A(r,R), \A(r_\ast, R_\ast)\right)$ if and only if the mapping $\frac{1}{r_\ast}f\left(\frac{x}{r}\right)$ belongs to the class $\mathcal{F}\left(\A(1,\rho),\A(1,\rho_\ast )\right)$, where $$\rho=\frac{R}{r}>1,\ \ \  \rho_\ast =\frac{R_\ast}{r_\ast}>1.$$
Without loss of generality, we can assume that $r_\ast=r=1$ and $R_\ast,R>1$. So our main result can be formulated as follows:
\end{remark}
\begin{theorem}\label{mainth}
Let $\Phi\in C^3(0, \infty)$ be a positive and convex function.
Assume also that $\chi=1/\ddot\Phi$ and its derivative extends continuously on $[0,\infty)$ with $\chi(0)=\alpha\ge 0$.
Let $R>1$. Then for $\alpha>0$ there is $R_\circ\in(1, R)$  which depends on $\Phi$, $R$ such that  the boundary value problem
\begin{equation}\label{zhu-Oct-26-1}\left\{
  \begin{array}{ll}
    \ddot H(t)=(H-t\dot H)M(t), & \hbox{\text{where $M$ is defined in \eqref{mmm}} ;} \\
    H(1)=1\ \ \  H(R)=R_\ast & \hbox{}
  \end{array}
\right.\end{equation}
admits a unique solution $H\in C^\infty(a,\infty)$ where $a<1$ and $\dot H>0$,
if and only if $R_\ast\ge R_\circ$.

In particular, if $\alpha=0$, then we have $R_\circ =1$ and $R_\ast>1$ is arbitrary.

Moreover, for $R_*\geq R_\circ$ there exist a unique $\lambda\in[0, \infty)$ such that the solution $H$
of $(\ref{zhu-Oct-26-1})$ satisfies the condition $\dot H(1)=\lambda$. We denote that solution by $H_\lambda$.
\end{theorem}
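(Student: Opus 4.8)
The plan is to run a shooting argument in the initial slope $\lambda=\dot H(1)\ge0$. For each $\lambda\ge0$ let $H_\lambda$ denote the maximal solution of the initial value problem
\[
\ddot H=(H-t\dot H)\,M,\qquad H(1)=1,\qquad \dot H(1)=\lambda ,
\]
with $M$ as in \eqref{mmm}. Since $M$ depends regularly on $(t,H,\dot H)$ on the region $\{H>0\}$ — there the Jacobian-type argument of $\chi$ stays in $[0,\infty)$, on which $\chi$ and $\dot\chi$ are continuous — the Picard--Lindel\"of theorem produces a unique solution on a maximal open interval $I_\lambda\ni1$. Two structural remarks guide the rest. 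First, $H(t)=t$ is the solution for $\lambda=1$ (both sides of the ODE then vanish identically), so $H_1$ is admissible with $H_1(R)=R$. Second, the identity $H-t\dot H=-t^2\,\tfrac{d}{dt}\!\left(H/t\right)$ links the sign of $\ddot H_\lambda$ to the monotonicity of the tangential stretch $H_\lambda/t$ and to the sign of $M$, which is $\ge0$ by convexity of $\Phi$, i.e.\ $\chi=1/\ddot\Phi\ge0$.

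The first step is to record the a priori behaviour of $H_\lambda$. Using the above sign information one shows that $H_\lambda>0$ and $\dot H_\lambda>0$ on $I_\lambda\cap(1,\infty)$, that $\dot H_\lambda$ has no vertical asymptote so that the solution persists as long as $H_\lambda>0$, hence $I_\lambda\supseteq(a_\lambda,\infty)$ for some $a_\lambda<1$ (the equation is regular at $t=1$, giving the extension past the inner radius), and — reading $\ddot H_\lambda$ off the equation and bootstrapping — that $H_\lambda$ inherits the regularity of $M$, so $H_\lambda\in C^\infty(a_\lambda,\infty)$ when $\Phi$ (hence $\chi$, hence $M$) is smooth. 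A first consequence: every solution of \eqref{zhu-Oct-26-1} satisfying the stated side conditions is an integral curve of the same IVP started at $t=1$, hence equals $H_\lambda$ for the unique value $\lambda=\dot H(1)\ge0$; in particular the boundary value problem has at most one admissible solution.

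The second step is the analysis of the shooting function $\Lambda(\lambda):=H_\lambda(R)$ on $[0,\infty)$. Continuity of $\Lambda$ is continuous dependence on data. Strict monotonicity rests on a comparison principle for the equation: on the region swept out by admissible solutions the right-hand side $(H-t\dot H)M$ is non-decreasing in the $H$-variable — this is a sign condition to be read off from the form of $M$ in \eqref{mmm} and from $\chi\ge0$ — so ordered initial slopes yield strictly ordered solutions on $(1,R]$, whence $\Lambda$ is strictly increasing. Since $\Lambda(1)=R$, we obtain $R_\circ:=\Lambda(0)<R$. If $\alpha>0$ then $R_\circ>1$: for $\lambda=0$ the right-hand side of the ODE at $t=1$ for $H_0$ is a positive multiple of $\chi(0)=\alpha>0$, so $H_0$ is strictly increasing on $[1,R]$ and $H_0(R)>1$. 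If $\alpha=0$ the equation degenerates as $\dot H\to0$ (since $\chi(0)=0$), the solutions $H_\lambda$ flatten to the constant $1$ on $[1,R]$ as $\lambda\to0^+$, and therefore $R_\circ=\lim_{\lambda\to0^+}\Lambda(\lambda)=1$. Finally, an a priori estimate controlling the (bounded) rate at which $\dot H_\lambda$ can decay away from $\lambda$ — e.g.\ by comparison with the model $\ddot H\approx -t\dot H\,M$ valid near $t=1$ — forces $H_\lambda(R)\to\infty$ as $\lambda\to\infty$. Thus $\Lambda\colon[0,\infty)\to[R_\circ,\infty)$ is an increasing bijection, which is precisely the assertion of the theorem: \eqref{zhu-Oct-26-1} (with $\dot H>0$, $a<1$) is solvable if and only if $R_*\ge R_\circ$; the solution is then unique; and $\lambda=\dot H(1)=\Lambda^{-1}(R_*)\in[0,\infty)$ is the unique shooting parameter producing it, the solution being $H_\lambda$.

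The main obstacle is the sign analysis underlying the last two steps: the positivity of $\dot H_\lambda$, the absence of blow-up, and above all the comparison principle yielding strict monotonicity of $\Lambda$ all depend on the precise form of $M$ in \eqref{mmm} together with the convexity of $\Phi$ ($\chi\ge0$), and must be carried out so as to cover both the non-degenerate regime $\alpha>0$ and the degenerate regime $\alpha=0$. Equally delicate is the two-sided asymptotics of $\Lambda$ — identifying $\lim_{\lambda\to0^+}\Lambda(\lambda)=R_\circ$ (in particular $R_\circ=1$ when $\alpha=0$, through the loss of ellipticity) and excluding that $\Lambda$ stays bounded as $\lambda\to\infty$. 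By comparison, the local existence and uniqueness, the extension of $H_\lambda$ to the left of $t=1$, and the $C^\infty$ bootstrap are routine once $M$ is known to be regular and non-degenerate on $\{H>0,\ \dot H\ge0\}$.
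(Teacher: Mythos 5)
Your overall architecture (shooting in the initial slope $\lambda=\dot H(1)$, then continuity, monotonicity and asymptotics of $\lambda\mapsto H_\lambda(R)$) is the same as the paper's, but the step that carries all the weight in your sketch is not justified and, as stated, is false. You derive strict monotonicity of the shooting map — which is exactly what you need both for the uniqueness of the boundary value problem's solution and for the ``only if'' direction — from the claim that the right-hand side $f(t,H,\dot H)=(H-t\dot H)M$ is non-decreasing in $H$ on the region swept by admissible solutions, ``to be read off from \eqref{mmm} and $\chi\ge 0$.'' This sign condition fails already in the simplest model: for $n=2$ and $\Phi$ quadratic the equation is \eqref{soluniq}, and at points with $\dot H=0$ one computes $f=H/(\kappa^2H^2+t^2)$, whence $\partial_H f=(t^2-\kappa^2H^2)/(\kappa^2H^2+t^2)^2<0$ as soon as $\kappa H>t$ — a region reached by the shooting solutions with small $\lambda$ whenever $\kappa>1$. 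So the second-order comparison principle you invoke has a false premise; the monotonicity of $\Lambda$ may well be true, but it cannot be obtained this way, and with it your proof of uniqueness and of the equivalence $R_*\ge R_\circ$ collapses. A secondary, smaller gap: the claim $\Lambda(\lambda)\to\infty$ as $\lambda\to\infty$ is only asserted (``an a priori estimate \dots by comparison with the model $\ddot H\approx -t\dot H M$''), with no quantitative lower bound on how much of the initial slope survives up to $t=R$.

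The paper avoids both difficulties by not working with the second-order equation directly. Using \eqref{lcon}, the quantity $H-t\dot H$ has constant sign, so $s=H(t)/t$ is monotone and the substitution $\dot H(t)=v(s)/s$ reduces the problem to the first-order equation \eqref{dots} for $v(s)$. There the ordering of the solutions $v_\lambda$ in $\lambda$ is automatic (integral curves of a first-order ODE cannot cross), the explicit bound \eqref{bounded} yields global existence on $(0,\infty)$ and boundedness of each $v_\lambda$, and a quantitative lower bound $v_\lambda(s)\ge e^{-M_0}\lambda$ derived from \eqref{bounded} forces $H_\lambda(R)\to\infty$ as $\lambda\to\infty$. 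The map $H_\lambda$ is then recovered from the first-order equation $\dot H=\frac{t}{H}\,v_\lambda\!\left(\frac{H}{t}\right)$ as in \eqref{hlam}, and the intermediate value theorem applied to $\lambda\mapsto H_\lambda(R)$, with $R_\circ=H_0(R)$, finishes the argument. If you want to keep your direct second-order shooting scheme, you must replace the false monotonicity-in-$H$ claim by an argument of this type (or by an analysis of the variational equation along the flow that does not presuppose $f_H\ge0$), and you must supply the missing quantitative estimate for the limit $\lambda\to\infty$.
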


For $R>1$ and $R_\ast\ge R_\circ$ as in Theoren~\ref{mainth}, let $\A=\A(1,R)$ and $\A_\ast=\A_\ast(1,R_\ast)$.
Now we formulate the following result.
\begin{theorem}\label{oct-22-Thm-3}
Under the conditions of Theorem~$\ref{mainth}$, the radial mapping
$h=h_{\lambda}$ defined by
$h(x)=H_{\lambda}(|x|)\frac{x}{|x|}$,  minimizes the function
$\mathcal{E}_\Phi:\mathcal{R}(\A,\A_*)\to \mathbf{R}$.
\end{theorem}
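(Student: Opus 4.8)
The plan is to turn $\mathcal{E}_\Phi$, restricted to $\mathcal{R}(\A,\A_\ast)$, into a one–dimensional variational integral, to perform the change of dependent variable $w=H^{n}$ that renders the (otherwise non‑convex) integrand jointly convex, and then to calibrate $h_\lambda$ by the classical convexity argument. Write a competitor $g\in\mathcal{R}(\A,\A_\ast)$ as $g(x)=G(|x|)\,x/|x|$, where $G\colon[1,R]\to[1,R_\ast]$ is increasing and absolutely continuous with $G(1)=1$, $G(R)=R_\ast$. At $|x|=t$ the differential of $g$ has radial singular value $\dot G(t)$ (simple) and tangential singular value $G(t)/t$ (multiplicity $n-1$), so $\|Dg\|^{n}=(\dot G^{2}+(n-1)G^{2}/t^{2})^{n/2}$ and $\det Dg=\dot G\,(G/t)^{n-1}$; integrating in polar coordinates,
$$
\mathcal{E}_\Phi[g]=\omega_{n-1}\int_{1}^{R}\Big[\big(\dot G^{2}+(n-1)G^{2}/t^{2}\big)^{n/2}+\Phi\big(\dot G\,G^{n-1}/t^{n-1}\big)\Big]t^{n-1}\,dt ,
$$
$\omega_{n-1}=\mathcal{H}^{n-1}(\mathbb{S}^{n-1})$. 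Substituting $w=G^{n}$ one has $\dot w=nG^{n-1}\dot G$, hence $\det Dg=\dot w/(nt^{n-1})$ and, by an elementary identity, $\dot G^{2}+(n-1)G^{2}/t^{2}=w^{2/n}\big[(\dot w/(nw))^{2}+(n-1)/t^{2}\big]$, so that
$$
\mathcal{E}_\Phi[g]=\omega_{n-1}\int_{1}^{R}\tilde F(t,w,\dot w)\,dt,\qquad \tilde F(t,w,q)=w\,\Psi_{t}\!\big(q/w\big)+t^{n-1}\,\Phi\!\big(q/(nt^{n-1})\big),
$$
where $\Psi_{t}(s)=t^{n-1}\big(s^{2}/n^{2}+(n-1)/t^{2}\big)^{n/2}$. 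For each $t>0$ the function $\Psi_{t}$ is convex on $\R$ (a strictly positive convex function raised to the power $n/2\ge 1$) with $\Psi_{t}''>0$; therefore $(w,q)\mapsto w\,\Psi_{t}(q/w)$ is the perspective function of $\Psi_{t}$ and is jointly convex on $\{w>0\}$, and since $q\mapsto t^{n-1}\Phi(q/(nt^{n-1}))$ is convex (because $\Phi$ is), $\tilde F(t,\cdot,\cdot)$ is jointly convex on $\{w>0\}$ and strictly convex in the variable $q$.

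With this in hand the calibration step is routine. By the definition of $M$ in \eqref{mmm}, the ODE in \eqref{zhu-Oct-26-1} is exactly the Euler–Lagrange equation of the reduced functional; equivalently $w_\lambda:=H_\lambda^{\,n}\in C^{\infty}[1,R]$ is strictly positive, satisfies $w_\lambda(1)=1$, $w_\lambda(R)=R_\ast^{\,n}$, and $\tfrac{d}{dt}\tilde F_{q}(t,w_\lambda,\dot w_\lambda)=\tilde F_{w}(t,w_\lambda,\dot w_\lambda)$ on $[1,R]$. Let $g\in\mathcal{R}(\A,\A_\ast)$ be arbitrary and put $W=G^{n}$, an absolutely continuous, strictly positive function with $W(1)=1$, $W(R)=R_\ast^{\,n}$ and $\dot W\in L^{1}$. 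If $\int_{1}^{R}\tilde F(t,W,\dot W)\,dt=\infty$ there is nothing to prove; otherwise, convexity and differentiability of $\tilde F(t,\cdot,\cdot)$ at the interior point $(w_\lambda(t),\dot w_\lambda(t))$ give, for a.e.\ $t$,
$$
\tilde F(t,W,\dot W)\ \ge\ \tilde F(t,w_\lambda,\dot w_\lambda)+\tilde F_{w}(t,w_\lambda,\dot w_\lambda)(W-w_\lambda)+\tilde F_{q}(t,w_\lambda,\dot w_\lambda)(\dot W-\dot w_\lambda).
$$
Integrating over $[1,R]$ and integrating the last term by parts (legitimate since $W-w_\lambda\in\mathcal{W}^{1,1}(1,R)$ and $t\mapsto\tilde F_{q}(t,w_\lambda,\dot w_\lambda)\in C^{1}$), the boundary contribution $[\tilde F_{q}(t,w_\lambda,\dot w_\lambda)(W-w_\lambda)]_{1}^{R}$ vanishes because $W$ and $w_\lambda$ have the same endpoint values, and the bulk term $\int_{1}^{R}(\tilde F_{w}-\tfrac{d}{dt}\tilde F_{q})(W-w_\lambda)\,dt$ vanishes by the Euler–Lagrange equation. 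Hence $\mathcal{E}_\Phi[g]\ge\mathcal{E}_\Phi[h_\lambda]$, so $h_\lambda$ minimizes $\mathcal{E}_\Phi$ on $\mathcal{R}(\A,\A_\ast)$. Moreover, strict convexity of $\tilde F$ in $q$ makes the displayed inequality strict on a set of positive measure unless $\dot W=\dot w_\lambda$ a.e., which together with $W(1)=w_\lambda(1)$ forces $W\equiv w_\lambda$; thus $h_\lambda$ is the unique minimizer in $\mathcal{R}(\A,\A_\ast)$.

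The crux of the argument is the first paragraph. The integrand $(\dot H^{2}+(n-1)H^{2}/t^{2})^{n/2}+\Phi(\dot H H^{n-1}/t^{n-1})$ is \emph{not} jointly convex in $(H,\dot H)$ — the Hessian of the neohookean term $\Phi(\dot H H^{n-1}/t^{n-1})$ has nonpositive determinant as soon as $\Phi'>0$ — and the whole point of passing to $w=H^{n}$ is that this substitution turns the neohookean term into a function of $\dot w$ alone and the $n$‑Dirichlet term into a perspective function, so that convexity is restored and the standard calibration (Weierstrass/field‑theory) argument applies. Beyond this, only routine checks remain: the elementary identity relating $\dot G^{2}+(n-1)G^{2}/t^{2}$ to $w,\dot w$; the positivity $\Psi_{t}''>0$; the absolute continuity and $L^{n}$‑integrability of $\dot W=nG^{n-1}\dot G$ for a competitor $g\in\mathcal{W}^{1,n}$, which validates the integration by parts; and the trivial infinite‑energy case. (If $\|\cdot\|$ is instead read as the operator norm, the $n$‑Dirichlet term becomes $\max\{q^{n}t^{n-1}/(n^{n}w^{n-1}),\,w/t\}$, again jointly convex on $\{w>0\}$ as the maximum of a perspective function and a linear one, so the scheme is unaffected.)
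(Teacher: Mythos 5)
Your argument is correct in substance, but it takes a genuinely different route from the paper's. The paper proves Theorem~\ref{oct-22-Thm-3} by the direct method: it only checks that the reduced integrand $\Lambda(t,H,\dot H)$ is convex in $\dot H$ (via $\partial_{KK}\mathcal{L}>0$) and coercive ($C|\dot H|^n\le\mathcal{L}$), takes a minimizing sequence of radial profiles $H_m$, extracts a weak $\mathcal{W}^{1,n}$ limit, invokes a lower-semicontinuity/existence theorem (Dacorogna) and then one-dimensional regularity (Jost) to conclude the limit is smooth and solves the Euler--Lagrange equation \eqref{hsec}, and finally identifies it with $H_\lambda$ through the uniqueness of the stationary solution from Theorem~\ref{mainth}. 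You instead restore \emph{joint} convexity by the substitution $w=H^n$, which turns the $n$-harmonic term into a perspective function and the neohookean term into a function of $\dot w$ alone, and then calibrate: the first-order convexity inequality at $(w_\lambda,\dot w_\lambda)$, integrated by parts against the Euler--Lagrange equation and the matched boundary values, gives minimality directly, with no compactness, lower semicontinuity or regularity input, and even yields uniqueness of the radial minimizer as a by-product. Your route does require a few points the paper's does not: the (routine, but worth stating) verification that $w_\lambda$ solves the Euler--Lagrange equation of the transformed integrand, i.e.\ invariance of stationarity under the smooth change of dependent variable; a word about competitors with $\dot W=0$ on a set of positive measure, where $\Phi(0^+)$ may be infinite (then the energy is infinite and the inequality is vacuous) or finite (extend $\Phi$ by continuity); and, for the uniqueness remark, strict convexity in $q$ alone does not exclude affine directions of $\tilde F$ that move $w$ and $q$ simultaneously --- one should add that the perspective part is strictly convex in $w$ for fixed $q=\dot w_\lambda>0$ --- though uniqueness is not asserted in the theorem, so this does not affect the stated result. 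In exchange, the paper's proof is shorter given the cited textbook theorems but leans on regularity theory and on Theorem~\ref{mainth}'s uniqueness of stationary points, whereas your calibration is self-contained, pointwise quantitative, and works for competitors whose radial profile is merely absolutely continuous with finite energy.
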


For $n=2$, Iwaniec and Onninen (cf. \cite{annalen}) considered the function
$\Phi \in C^\infty(0,\infty)$ which is positive and strictly convex.
Moreover, they assumed the following condition on $\Phi$:
\emph{The function $\Psi(s)=1/\ddot \Phi(s)$ and its derivative extend continuously to $[0,\infty)$, with $\Psi(0)=0.$}
We are interested in the case when $\Phi$ does not satisfies this condition.
We have the following counterpart of their result in \cite[Theorem 1]{annalen}.

\begin{theorem}\label{thew}
Let $n= 2$. Assume that $\Phi$,  $R>1$, $\lambda$ and $R_\circ$ are defined as in Theorem $\ref{mainth}$. Assume that $\mathcal{F}(A,A_*)$ is the family of of  all  orientation  preserving homeomorphisms
 $h  : \A \mapsto \A_\ast$ in the Sobolev space $\mathcal{W} ^{1,2}(\A, \A_\ast)$ which keep the boundary circles in the same order  with finite energy. Then there exists a diffeomorphism that minimizes the function
$\mathcal{E}_\Phi:\mathcal{F}(A,A_*)\to \mathbf{R}$ if and only if  If $R_*\ge R_\circ$.  In this case the minimizer is the radial mapping
$h=h_{\lambda}$ defined as $h(x)=H_{\lambda}(|x|)\frac{x}{|x|}$.
\end{theorem}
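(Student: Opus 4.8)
The plan is to reduce the two–dimensional statement (Theorem~\ref{thew}) to the already–established radial case (Theorem~\ref{oct-22-Thm-3}) together with a symmetrization argument in the spirit of Iwaniec--Onninen. First I would record the \emph{necessity} direction: if $R_\ast<R_\circ$ then by Theorem~\ref{mainth} the boundary value problem \eqref{zhu-Oct-26-1} has \emph{no} admissible solution with $\dot H>0$, and I would argue that any diffeomorphic minimizer of $\mathcal{E}_\Phi$ over $\mathcal{F}(\A,\A_\ast)$ would, after the symmetrization step below, produce a radial minimizer, hence a solution of the Euler--Lagrange equation in \eqref{zhu-Oct-26-1}, a contradiction. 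So the substantive content is the \emph{sufficiency} direction: when $R_\ast\ge R_\circ$, the radial map $h_\lambda$ from Theorem~\ref{oct-22-Thm-3} is a minimizer not merely in $\mathcal{R}(\A,\A_\ast)$ but in the full class $\mathcal{F}(\A,\A_\ast)$.

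For sufficiency I would proceed as follows. Fix an arbitrary $f\in\mathcal{F}(\A,\A_\ast)$ of finite energy and write it in polar form $f(re^{i\theta})=\rho(r,\theta)e^{i\phi(r,\theta)}$ where, because $f$ is an orientation–preserving homeomorphism keeping the boundary circles in order, $\theta\mapsto\phi(r,\theta)$ has topological degree one for each fixed $r$, and $\rho(1,\cdot)\equiv1$, $\rho(R,\cdot)\equiv R_\ast$ in the trace sense. The key pointwise inequality is the standard decomposition
\begin{equation}\label{polarbound}
\|Df\|^2\ge \rho_r^2+\rho^2\phi_\theta^2/r^2,\qquad \det Df=\tfrac{1}{r}\rho(\rho_r\phi_\theta-\rho_\theta\phi_r),
\end{equation}
valid a.e. I would then integrate in $\theta$ over $[0,2\pi]$ and use Jensen's inequality together with the convexity of $t\mapsto t^{n/2}=t$ (here $n=2$, so $\|Df\|^n=\|Df\|^2$) and of $\Phi$, replacing $f$ by its \emph{average radial profile} $\bar\rho(r)=\frac{1}{2\pi}\int_0^{2\pi}\rho(r,\theta)\,d\theta$. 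Using that $\int_0^{2\pi}\phi_\theta\,d\theta=2\pi$ (degree one) and that the Jacobian integrates to the area $\pi(R_\ast^2-1)$ regardless of $f$, this shows $\mathcal{E}_\Phi[f]\ge\mathcal{E}_\Phi[h_{\bar\rho}]$ where $h_{\bar\rho}(x)=\bar\rho(|x|)\tfrac{x}{|x|}$ is a radial competitor (after checking $\bar\rho$ is monotone, which follows from $f$ being a homeomorphism, or can be bypassed by a further rearrangement). Since $h_{\bar\rho}\in\mathcal{R}(\A,\A_\ast)$, Theorem~\ref{oct-22-Thm-3} gives $\mathcal{E}_\Phi[h_{\bar\rho}]\ge\mathcal{E}_\Phi[h_\lambda]$, and combining yields $\mathcal{E}_\Phi[f]\ge\mathcal{E}_\Phi[h_\lambda]$ as desired; that $h_\lambda$ is itself a diffeomorphism follows from $\dot H_\lambda>0$ and $H_\lambda\in C^\infty$.

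The delicate points, and where I expect the real work to lie, are twofold. First, the symmetrization must respect the free boundary behaviour: unlike the classical Dirichlet problem on annuli, here the inner radius is pinned ($\rho(1,\cdot)=1$) but we must make sure the averaging does not decrease $\bar\rho$ below the admissible range or destroy the constraint $\bar\rho(R)=R_\ast$; this is where the hypothesis $R_\ast\ge R_\circ$ re-enters, guaranteeing that the minimizing radial profile $H_\lambda$ genuinely attains the boundary data with $\dot H>0$ rather than developing the Nitsche–type ``hammer'' (a flat piece where the map collapses). Second, one must justify the use of Jensen/convexity for the $\Phi(\det Df)$ term, which requires controlling the sign of the Jacobian (orientation preservation gives $\det Df\ge0$ a.e.) and an $L^1$ bound on $\det Df$; both are consequences of $f\in\mathcal W^{1,2}(\A,\A_\ast)$ being an orientation-preserving homeomorphism of finite energy, via the area formula. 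A secondary obstacle is the monotonicity of $\bar\rho$: if it fails one replaces $\bar\rho$ by its monotone rearrangement and checks, using convexity once more, that the energy does not increase — this is routine but must be stated. Modulo these points, the argument is a clean reduction to the radial theorems already proved.
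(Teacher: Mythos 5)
Your reduction hinges on the single inequality $\mathcal{E}_\Phi[f]\ge\mathcal{E}_\Phi[h_{\bar\rho}]$, obtained by averaging in $\theta$ and invoking Jensen, and this is precisely the step that does not go through. The two terms of the integrand symmetrize toward \emph{different} radial profiles. For the determinant term, Jensen on each circle gives a lower bound by $\Phi$ of the circular mean of $\det Df$, and a computation (integrate $\rho\rho_\theta\phi_r$ by parts in $\theta$) shows that this mean equals $\frac{1}{2r}\frac{d}{dr}\bigl(\frac{1}{2\pi}\int_0^{2\pi}\rho^2\phi_\theta\,d\theta\bigr)$, i.e.\ the Jacobian of the radial map with profile $P(r)=\bigl(\frac{1}{2\pi}\int_0^{2\pi}\rho^2\phi_\theta\,d\theta\bigr)^{1/2}$ --- not the Jacobian $\bar\rho\,\bar\rho_r/r$ of $h_{\bar\rho}$. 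For the gradient term, averaging favours the arithmetic mean $\bar\rho$ (and even there the angular part $\frac{1}{r^2}\int\rho^2\phi_\theta^2\,d\theta$ is not bounded below by $2\pi\bar\rho^2/r^2$ without further work). Since $\Phi$ is only assumed positive and convex, there is no monotonicity of $\Phi$ you can use to pass from one profile to the other, so no single radial competitor is dominated by both terms simultaneously. This mismatch is not a technicality to be fixed by a monotone rearrangement of $\bar\rho$ (which in any case interacts badly with the Jacobian term); it is the central difficulty of the radial-symmetry statement. The paper does not attempt a symmetrization at all: its proof of Theorem~\ref{thew} consists of combining Theorem~\ref{mainth} with the proof of Theorem~1 of Iwaniec--Onninen \cite{annalen}, whose lower bound for general $f\in\mathcal{F}(\A,\A_\ast)$ is obtained by the method of free Lagrangians --- pointwise estimates of $\|Df\|^2+\Phi(\det Df)$ from below by null Lagrangians whose integrals over $\A$ depend only on the boundary data and the homotopy class, with equality exactly at the radial stationary map $h_\lambda$. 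Your sketch replaces that machinery with an averaging argument that, as written, fails.

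The necessity direction inherits the same gap: you argue that a diffeomorphic minimizer for $R_\ast<R_\circ$ would ``after the symmetrization step'' yield a radial minimizer solving \eqref{zhu-Oct-26-1}, but without a valid symmetrization this chain breaks, and in addition you would still need to show that a minimizer within $\mathcal{R}(\A,\A_\ast)$ for sub-critical $R_\ast$ must be a $C^\infty$ solution of the boundary value problem with $\dot H>0$ (in the sub-critical regime the expected behaviour is a Nitsche-type collapse of part of the annulus onto the inner boundary, so the infimum is simply not attained by homeomorphisms, which is a separate argument in \cite{annalen}). Also note two smaller slips: $\|Df\|^2$ in polar form contains the extra terms $\rho^2\phi_r^2+\rho_\theta^2/r^2$, so your display is a lower bound rather than an identity (you use it as such, which is fine, but the discarded terms are exactly what the free-Lagrangian bounds exploit); and the degree-one normalization $\int_0^{2\pi}\phi_\theta\,d\theta=2\pi$ needs justification for a.e.\ circle for a Sobolev homeomorphism, which is standard but not free. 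In short, the proposal identifies the right target (reduce to Theorem~\ref{oct-22-Thm-3}) but the bridge from $\mathcal{F}(\A,\A_\ast)$ to the radial class is missing; you should either import the free-Lagrangian estimates of \cite{annalen} explicitly or supply a genuinely different proof of the inequality $\mathcal{E}_\Phi[f]\ge\mathcal{E}_\Phi[h_\lambda]$.
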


The paper is organized as follows, in Section \ref{sec-2} we give some results on Neohookean energy of radial mappings in higher dimensional.
In Section \ref{sec-3} we present the proofs of Theorem \ref{mainth}--Theorem \ref{thew}.
In Section \ref{sec-4} we determine $R_\circ$ in the quadratic case and give the application of comparison theorem.

\section{Auxiliary results}\label{sec-2}
\subsection{Hilbert norm of derivatives of the radial stretching and the Jacobian}
Assume that $h(x)=H(s)\frac{x}{s}$, where $s=|x|$. Let
$\mathcal{H}(s):=\frac{H(s)}{s}$. Since $\mathrm{grad}
(s)=\frac{x}{|x|}$, we obtain
$$Dh(x)=\left(\mathcal{H}(s)\right)'\frac{x\otimes   x}{s}+ \mathcal{H}(s) \mathbf{I},$$ where $\mathbf{I}$ is the identity matrix.
For $x\in {\mathbb A}$, let $T_1=N=\frac{x}{|x|}$. Further, let
$T_2$, $\dots$, $T_n$ be $n-1$ unit vectors mutually orthogonal and
orthogonal to $N$. Then
\[\begin{split} \|Dh(x)\|^2&=\sum_{i=1}^n|Dh(x) T_i |^2=\sum_{i=1}^n\left|\left(\mathcal{H}(s)\right)' \frac{\left<T_i,x\right>}{s} x +\mathcal{H}(s) T_i \right|^2
\\&
= \left(\mathcal{H}'(s)\right)^2  s^2+n \left( \mathcal{H}(s)\right)^2+2\mathcal{H}(s)\mathcal{H}'(s) s
 \\&=\frac{n-1}{s^2} H^2+\dot H^2.\end{split}\]
Moreover, with respect to the basis $T_i$ ($i=1,\dots, n$), we have
$$D^*h Dh=\left(
                                   \begin{array}{cccc}
                                     \dot H^2 & 0 & \dots & 0 \\
                                     0 & \frac{H^2}{s^2} & \dots & 0 \\
                                     \vdots &  \vdots & \ddots & \vdots \\
                                     0 & 0 & \dots & \frac{H^2}{s^2} \\
                                   \end{array}
                                 \right),$$ where $D^*h$ is the adjugate of the matrix $Dh$.
Thus $$J=J_h=\frac{\dot HH^{n-1}}{s^{n-1}}.$$


\subsection{Neohookean energy of radial mappings}\label{sec2-2}
The Neohookean energy of a mapping $f\in \mathcal{F}(\A,\A_\ast)$ is defined by
\[\begin{split}\mathcal{E}_\Phi[h]=\int_{\mathbb{A}} \|Dh\|^n +\Phi(\mathrm{Det}\, Dh).\end{split}\]
Then we should calculate the energy of a radial stretching $h=H(r)\frac{x}{|x|}$.

The Euler Lagrange equation is given as follows
$$\Lambda_H-\partial_t \Lambda_{\dot H}=0,$$
where
\begin{equation}\label{zhu-Oct-25-1}\Lambda(t,H,\dot H)= t^{n-1}\left(\|Dh\|^n +\Phi(\mathrm{Det}\, Dh)\right),\end{equation}
and
$\Lambda_H=\frac{\partial \Lambda}{\partial H}$, $\Lambda_{\dot H}=\frac{\partial\Lambda}{\partial \dot H}$.

For
$$\|Dh\|^n= \left(\frac{n-1}{t^2} H^2+\dot H^2\right)^{n/2}$$
and
$$\mathrm{Det}\, Dh= \frac{\dot HH^{n-1}}{t^{n-1}},$$
after long but straightforward calculations, the Euler Lagrange equation reduces to
\begin{equation}\label{hsec}\ddot H(t)= M(t)(H-t\dot H),\end{equation}
where
\begin{equation}\label{mmm}
M(t)=\frac{n-1}{t^2 H}\frac{B+C}{D+E}.
\end{equation}
Here
$$B=n t^{2 n} H^3 \left(\frac{(n-1) H^2}{t^2}+\dot H^2\right)^{n/2} \left((n-1) H^2+(n-2) t H \dot H+t^2 {\dot H}^2\right),$$
$$C= H^{2 n} \dot H \left((n-1) H^2+t^2 \dot H^2\right)^2 \ddot \Phi\left[t^{1-n} H^{n-1} \dot H\right],$$

$$D=(n-1) n t^{2 n} H^2 \left(\frac{(n-1) H^2}{t^2}+\dot H^2\right)^{n/2} \left(H^2+t^2 \dot H^2\right)$$ and $$E=H^{2 n} \left((n-1) H^2+t^2 \dot H^2\right)^2 \ddot \Phi\left[t^{1-n} H^{n-1} \dot H\right].$$
The equality (\ref{hsec}) can be rewritten as follows: $$ (H-s\dot H)'=-s (H-s\dot H) M(s),$$ which is
equivalent to

$$\big[\log(H-s\dot H)\big]'=-s M(s).$$
Thus $$\log(H-s\dot H) = \int_{s_1}^{s} (-\tau M(\tau))d\tau+c,$$
which gives that
\begin{equation}\label{lcon}H-t\dot H=c \exp\left[\int_{t_1}^{t} (-\tau M(\tau))d\tau\right].\end{equation}
If the diffeomorphic solution of \eqref{hsec} exists such that $H(r)=r_\ast$ and $H(R)=R_\ast$\,, then it follows from \eqref{lcon} that the expression $H-t\dot H$ cannot change sign. If it is negative, then $\frac{H(t)}{t}$ is increasing; and if it is positive, then $\frac{H(t)}{t}$ is decreasing. In both cases we chose the substitution $\dot H(t) = \frac{v(s)} {s}$
where $s = \frac{H(t)}{t}$.

We now consider three cases concerning the elasticity function
\begin{equation}\label{inel}\mu_H(t) = \frac{t\dot H}{H}<1, \ddot H>0 \text{ for all $r < t < R$ (inelastic case)}\end{equation}
\begin{equation}\label{conf}\mu_H(t) = \frac{t\dot H}{H}=1, \ddot H=0 \text{ for all $r < t < R$ (conformal  case)}\end{equation}
\begin{equation}\label{inel1}\mu_H(t) = \frac{t\dot H}{H}>1, \ddot H<0 \text{ for all $r < t < R$ (elastic case)}.\end{equation}
Then
\begin{equation*}\dot v=G(v,s):=-\frac{X_n(v,s)}{Z_n(v,s)},\end{equation*}
where \[\begin{split}X_n(v,s)&=(n-1) n s^4 \left((n-1) s^4+v^2\right)^{\frac{n}{2}}\\ &\times \left((n-1) s^6+v ((n-3) s^4+(s^2-v) v)\right)\\&+(n-2) s^{3 n} v \left((n-1) s^4+v^2\right)^2 \ddot\Phi\left(s^{n-2} v\right)\end{split}\]
and
\[\begin{split}Z_n(v,s)&=(n-1) n s^{5} \left(s^4+v^2\right) \left((n-1) s^4+v^2\right)^{n/2}\\&+s^{1+3 n} \left((n-1) s^4+v^2\right)^2 \ddot\Phi\left(s^{n-2} v\right).\end{split}\]
Since \begin{equation}\label{dots}\dot v=-\frac{(n-2) v}{s}+\frac{Y_n(v,t)}{Z_n(v,s)}\end{equation}
where \[\begin{split}Y_n(v,s)&=(n-1) n s^4 \left(v-s^2\right)\\&\times  \left((n-1) s^4+v^2\right)^{n/2} \left((n-1) s^4+(n-2) s^2 v+(n-1) v^2\right)\end{split}\]
we see that
 $$|G(v,s)|\le  \frac{(n-2) v}{s}+\left|{\frac{\left(s^2-v\right) \left((n-1) s^4+(n-2) s^2 v+(n-1) v^2\right)}{s \left(s^4+v^2\right)}}\right|.$$
Thus \begin{equation}\label{bounded}|G(v,s)|\le \frac{(3n-5)v}{s}+(n-1)s.\end{equation}

Furthermore, calculations lead to
\begin{equation}\label{Nov-5-zhu}
\dot v=\dot H-MtH,
\end{equation}
where $M$ is given by (\ref{mmm}). This shows that to solve (\ref{hsec}), one can look for $H$ by solving the equation
$\dot H=\frac{t}{H}v(H/t)$.

\section{Proofs of Theorem \ref{mainth} -- Theorem \ref{thew}}\label{sec-3}
\subsection*{Proof of Theorem \ref{mainth}}
As was shown in Section \ref{sec2-2}, to prove Theorem \ref{mainth} we need to solve the auxiliary equation \begin{equation}\label{vg}\dot v(s)=G(s,v).\end{equation}

According to the assumptions we see that $G\in C((0,+\infty)\times \mathbf{R})$ and $G_v(t,0+)$ exists.
We can define $G$ for $v<\infty$ by setting $G(t,-v)=2G(t,0)-G(t,v)$.
Furthermore, it follows from Picard--Lindel\"of Theorem that for every $(s_\circ,v_\circ)\in  (0,+\infty)\times \mathbf{R}$ there exists a local solution $v$ of \eqref{vg} with the initial condition $v(s_\circ)=v_\circ$.
\begin{description}
  \item[Claim 1]  Let $(a,b)$ be the maximal interval of the solution $v$ containing $s_0$.  Then $a=0$ and $b=\infty$.
\end{description}

Prove that $a>0$ leads to the contradiction. In this case $v$ is bounded near $a$. Assume that it is not bounded, and let $a_k $ be a sequence of points such that $a_k\geq a$
satisfying $\lim\limits_{k\rightarrow\infty}a_k=a$ and $\lim\limits_{k\rightarrow\infty}v(a_k)=\infty$.
Then for large enough $k$, there exists $s\in(a, a_k)$ such that $v(s)\geq a^2$ and $v(a_k)>a$.
Namely, assume that $s\in (a,a_k)$. Then for some $m$, $s\in [a_m,a_{m+1}]$, if the minimum of $v$ in $[a_m,a_{m+1}]$ is not $v(a_m)$, then there exits a $s_\circ\in (a_m,a_{m+1})$ such that $\dot v(s_\circ)=0$. According to $\eqref{dots}$ we see that $v(s_\circ)>s_\circ^2>a^2$.   Thus  from \eqref{bounded}, for $a<s<a_k $ we have that $$\frac{|\dot v(s)|}{|v(s)|}\le \frac{3n-5}{s}+\frac{s(n-1)}{ a^2}.$$ By integrating the previous inequality for $a<s<a_k$ we obtain $$\abs{\log v(a+0)-\log\left(v\left(a_k\right)\right)}\le (3n-5)\log\frac{a_k}{a}+\frac{(n-1)}{a^2}\left(a_k^2-a^2\right).$$

Thus $v(a+0)$ is finite, and we can continue the solution $v$ below $a$. This is a contradiction to the assumption and therefore $a=0$.

Prove that $b<\infty $ leads to the contradiction. In this case we prove $v$ is bounded near $b$.
Assume that it is not bounded, and let $b_k $ be a sequence of points such that
$b_k\leq b$ satisfying $\lim\limits_{k\rightarrow\infty}b_k=b$ and $\lim\limits_{k\rightarrow\infty}v(b_k)=\infty$.
Then for large enough $k$, there exists $s\in(b_k,b)$ such that $v(s)\ge b^2/2$ and $v(b_k)>b$.
Namely, assume that $s\in (b_m,b)$. Then for some $m$, $s\in [b_m,b_{m+1}]$, if the minimum of $v$ in $[b_m,b_{m+1}]$ is not $v(b_m)$, then there exists a $s_\circ\in (b_m,b_{m+1})$ such that $\dot v(s_\circ)=0$. According to $\eqref{dots}$, we see that $v(s_\circ)>s_\circ^2>b_m^2$.   Thus  from \eqref{bounded}, for $b_k<s<b $ we have that $$\frac{|\dot v(s)|}{|v(s)|}\le \frac{3n-5}{s}+\frac{4s(n-1)}{ b^2}.$$ By integrating the previous inequality for $b_k<s<b$ we obtain $$\abs{\log v(b^-)-\log\left(v\left(b_m\right)\right)}\le (3n-5)\log\frac{b}{b_k}+\frac{4(n-1)}{b^2}\left(b^2-b_k^2\right).$$
Thus $v(b^-)$ is finite, and we can continue the solution $v$ above $b$. This is a contradiction to the assumption and therefore, $b=\infty$.
\begin{description}
  \item[Claim 2] Denote by $v=v_\lambda(s)$ the solution of $v$ with the initial condition $v_\lambda(1)=\lambda\in[0,+\infty)$. Then every solution of $v_{\lambda}$ is bounded.
\end{description}
If $v_\lambda(s)>s$ for all $s$, then $$\dot v\le -\frac{(n-3)v}{s}-s.$$ The solution of equation $$u'= -\frac{(n-3)u}{s}-s$$ is $$u(s)=-\frac{s^2}{n-1}+c x^{3-n}.$$
Thus $w=v-u$ is the solution of differential inequality  $$s^{2-n}(s^{n-3} w(s))'=sw'+(n-3)w\le 0.$$
This shows that
$$s^{n-3} w(s)\le w(1)=u(1)-v(1)$$ for $s\ge 1$.

By substitution we get
$$s^{n-3}(u(s)+\frac{s^2}{n-1}+c x^{3-n})\le \lambda+c_1$$
i.e.
$$s^{n-3}(v(s)+\frac{s^2}{n-1})\le c_2.$$
Hence
$$v(s)\le c_2 s^{3-n}-\frac{s^2}{n-1}$$ for $s>1$.
This contradicts the fact that $v(s)>s^2$ for all $s$.

The conclusion is that, there is $s_0$ so that $v(s_o)<s_o^2$. Let $s_\diamond$ be the maximal point such that $v(s_o)<s_o$  in $(s_o,s_\diamond)$. Then $s_\diamond=\infty$. Namely, it follows from \eqref{dots} that $v$ is decreasing in $(s_o,s_\diamond)$, if $s_\diamond<\infty$, then $v(s_\diamond)<v(s_o)<s_o<s_o^2<s_\diamond^2$.
By continuity we can find a point $s_1>s_\diamond$ such that $v(s)<s^2$  in $(s_o,s_1)$. Thus by \eqref{dots}, $v$ is decreasing in $(s_o,\infty)$. We can now conclude that $v$ is bounded on $(\tau,\infty)$ for any positive number $\tau>0$.
\begin{description}
  \item[Claim 3]  The boundary value problem admits a unique solution $H\in C^{\infty}(a, \infty)$ where $a<1$ and $\dot H>0$,
if and only if $R_\ast\ge R_\circ$.
\end{description}
According to the definition of $v_\lambda$ in the former Claim, we see that the mapping \begin{equation}\label{hh}V(s,\lambda)=v_\lambda(s):(0,\infty)\times [0,+\infty)\to [0,+\infty)\end{equation} is of class $\mathcal{C}^ 1(\mathbf{R}_+,\mathbf{ R})$, see \cite[Ch.~V,~ Corollary 4.1]{hartman}.
By using (\ref{Nov-5-zhu}), to solve (\ref{hsec}), one can consider the following differential equation with the initial condition:
\begin{equation}\label{hlam}\left\{
  \begin{array}{ll}
    \dot H(t)=\frac{H(t)}{t}v_\lambda\left(\frac{H(t)}{t}\right), & t\in (\alpha,\beta) \\
    H(1)=1. & \hbox{}
  \end{array}
\right.\end{equation}
The local solution of (\ref{hlam}) exists because of Picard-Lindel\"of Theorem.
Denote by $(\alpha,\beta)$ the maximal interval of the solution of the equation \eqref{hlam}. It is clear that $0\le \alpha<1$.
If $\beta<\infty$, then since $v$ is bounded, it follows from \eqref{hlam} (by integrating in $[1,\beta)$) that $$\log \frac{H(\beta-0)}{H(1)}\le C\log \frac{\beta}{1}$$
and thus $H(\beta-0)$ is finite. Then, we can continue the solution above $\beta$. Therefore, $\beta=\infty$.

Since $v_{o}(s)$ is a particular solution of \eqref{vg}, because $V$ defined in \eqref{hh} depends continuously on $\lambda$, there exists a particular solution $H_\circ$ of \eqref{hlam} with $\lambda=0$. Now a fixed $R$ we define
\begin{equation}\label{ro}R_o=H_\circ (R).\end{equation}

Prove now that, the diffeomorphic solution $H$ exists with $H(1)=1$ and $H(R)=R_\ast$ if and only if $R_\ast\ge R_o$,

For every $\lambda>0$, $v_\lambda(s)>v_{o}(s)$ and for every $\lambda<0$, $v_\lambda(s)<v_{o}(s)$ since \eqref{vg} has unique solution with the initial condition. Furthermore, for $0<\lambda<1$, the solution $H_\lambda$ to the equation \eqref{hlam} satisfies the inequality $1\cdot \dot H(1)-H(1)<0$, and thus inelasticity case \eqref{inel} occur. This implies that for all the points $t\in(a,\infty)$ we have $tH'(t)\le H(t)$ and thus $\frac{H(t)}{t}$ is decreasing. In particular we have
\begin{equation}\label{hoh}\frac{H_o(R)}{R}\le \frac{H_\lambda(R)}{R}<1,\end{equation}
for every $0\le \lambda< 1$. This implies that $$R_\circ<R$$ and $$R_*:=H_\lambda(R)\geq R_\circ.$$ The last inequality, implies that, if there exists a diffeomorphic solution $H:[1,R]\to [1,R_\ast]$ of the differential equation \eqref{hlam}, then $R_\ast\ge R_\circ$.
In order to show that every diffeomorphic solution $H:[1,R]\to [1,R_\ast]$ of the differential equation \eqref{zhu-Oct-26-1} admits the initial condition $\dot H(1)=\lambda$, we proceed as follows. Assume that $H$ is one such solution. Then by \eqref{lcon}, $H(t)/t$ is either increasing or decreasing or a constant function. If it is not a constant, a case which is easy to deal with, then $s=\sigma(t)=\frac{H(t)}{t}$ is a diffeomorphism. Now the function $u(s)$ defined by  $\dot H(\sigma^{-1}(s)) = \frac{u(s)} {s}$ satisfies the differential equation \eqref{dots}. If $u(1)=\lambda$, then $u=v_\lambda$. Further we obtain that $H=H_\lambda$, where $\lambda=\dot H(1)$. And this is all what is needed to prove for this direction.

Further we have $$H_\lambda(R)>R, \ \ \ \lambda>1.$$
Hence if $H=H_\lambda$ is the solution of \eqref{hlam}, then $$\lim_{\lambda\to 0}H_\lambda(R)=H_o(R).$$
Finally, we show that $$\lim_{\lambda\to +\infty} H_\lambda(R)=\infty.$$
Assume that $H_\lambda(R)\le M$ for every $\lambda$. Then we have \begin{equation}\label{baterfly}\log \frac{H(R)}{H(1)}\ge \log{R}\int_1^Rv_\lambda\left(\frac{H(t)}{t}\right)dt.\end{equation}
Thus $s=H(t)/t\in (M/R, M)$. Since $v_\lambda(s)>v_{\lambda_0}(s)$ for every $s$ if $\lambda>\lambda_0$, it follows from \eqref{bounded} that
$$\left|\log \frac{v_\lambda(s)}{v_\lambda(1)}\right|\le (3n-5)\log s +\frac{(s^2-1)(n-1)}{m}=M_0,$$ where $$m=\min\{v_{\lambda_0}(s): M/R\le s\le M/1\}.$$
This shows that
$$v_\lambda(s)\ge e^{-M_0}\lambda.$$
Applying \eqref{baterfly} we conclude that $$H(R)\ge \exp((R-1)e^{-M_0}\lambda \log{R} ).$$
This contradicts with the assumption that $H_\lambda(R)$ is bounded and therefore, $\lim\limits_{\lambda\to +\infty} H_\lambda(R)=\infty.$

Assume now that $R_\ast\ge R_\circ$. We need to show that there is a solution with $H(1)=1$ and $H(R)=R_\ast$. But this follows easily from the fact that the function $P(\lambda)=H_\lambda(R)$ is continuous and $\lim\limits_{\lambda\to 0}H(R)=H_\circ$ and $\lim\limits_{\lambda\to \infty}H(R)=+\infty.$

\subsection*{Proof of Theorem \ref{oct-22-Thm-3}}
Following the proof of Theorem \ref{mainth}, we know that the stationary point of (\ref{hsec}) is unique. We only need to
show that the given energy integral attains its minimum.

According to (\ref{zhu-Oct-25-1}), we see that
$$\Lambda(t,H,\dot H)= t^{n-1}\left(\left(\frac{n-1}{t^2} H^2+\dot H^2\right)^{n/2}+\Phi\left(\frac{\dot HH^{n-1}}{t^{n-1}}\right)\right).$$
For $K=\dot H$, we have the following formula (where $\mathcal{L}[t,H,K]$ is the subintegral expression for $\mathcal{E}[h]$)
\[\begin{split}\partial_{KK}\mathcal{L}[t,H,K]&=\frac{(-1+n) n t^{1+n} \left(\frac{H^2 (-1+n)}{t^2}+\dot{H}^2\right)^{n/2} \left(H^2+t^2 \dot{H}^2\right)}{\left(H^2 (-1+n)+t^2 \dot{H}^2\right)^2}\\&+H^{-2+2 n} t^{1-n} \ddot\Phi \left[H^{-1+n} t^{1-n} \dot{H}\right] \end{split}\]
 which is clearly positive and thus $\Lambda(t,H,\dot H)$ it is convex in $K=\dot H$.

Furthermore, since $r\le t\le R$, we can find a positive constant $C$ such that
\begin{equation}\label{bound1}C|\dot H|^n\le \mathcal{L}[t,H,\dot H].\end{equation} This implies that the function $L$ is coercive.

Let $h_m(x)=H_m[|x|]x/|x|$ be a sequence of smooth mappings  with $H_m(r)=r_\ast$, $H_m(R)=R_\ast$ and $$\inf_{h\in \mathcal{R}(A,A_*)}
  \mathcal{E}[h]=\lim_{m\to \infty}\mathcal{E}[h_m].$$
We have $H_m$ are diffeomorphisms since $\dot H_m\neq 0$. Then up to a subsequence it
   converges to a monotone increasing function $H_\circ$.  Moreover,  since $H_m$ is a bounded sequence of $\mathscr{W}^{1,n}$, it converges,
   up to a subsequence weakly to a mapping $H_\circ \in \mathscr{W}^{1,n}$.

By using the mentioned convexity of $\mathcal{L}$  and the fact that $\mathcal{L}$ is coercive, together with the standard theorem from the calculus of
variation (see \cite[p.~79]{bern}),  we obtain that $$\mathcal{E}[h_\circ]= \lim_{m\to \infty}\mathcal{E}[h_m].$$
Moreover, since $\mathcal{L}[t,H,K]\in C^\infty(\mathbf{R}_+^3)$ and $\partial^2_{KK}\mathcal{L}[t,H,K]>0$, we infer that
$H_\circ\in C^\infty[r,R]$ (see \cite[p.~17]{jost}) and $H_\circ$ is the solution of the Euler-Lagrange equation. Thus it coincides with $H_{\lambda_\ast}$.
\qed

\subsection*{Proof of Theorem \ref{thew}}
By using Theorem \ref{mainth} and the proof of \cite[Theorem~1]{annalen}, one can obtain this result.
\qed
\section{Determination of $R_\circ$ of the quadratic case and its application}\label{sec-4}
In the following part of this section we will determine $R_\circ$ is some special cases.
\subsection{The quadratic case $\Phi_\kappa(t)=\alpha+\beta t+{\kappa^2}t^2$} Then $\Phi$ is convex and satisfying
$$\frac{1}{\ddot\Phi(0)}=\frac{1}{2\kappa^2}\neq 0,$$ which makes complementary to the problem treated in \cite{annalen}.

Then the corresponding Euler equation is \begin{equation}\label{soluniq}\ddot H(t)= \frac{\left(H(t)-t \dot H(t)\right)
\left( t+\kappa^2 H(t) \dot H(t)\right)}{ t^3+\kappa^2 t H(t)^2}.\end{equation}
Let $s=\frac{H(t)}{t}$ and $\dot H=\frac{v(s)}{s}$. The auxiliary equation \eqref{dots} for this special case is
$$\dot v(s)=-\frac{ \left(s^2-v(s)\right)}{s \left(1+\kappa^2 s^2\right)}$$ whose solution is given by
$$ v(s)=\frac{s \left( c_0- \sinh^{-1}\left[{\kappa s}{}\right]\right)}{\kappa \sqrt{1+\kappa^2 s^2}},$$
where $c_0$ is a constant depends on $H$.
Now we assume that $\dot H(t)\ge 0$, and this implies that \begin{equation}\label{con}c_0\ge   \sinh^{-1}\left[{\kappa}{}\right].\end{equation}

Assume further that $1<R$ and $1<R_\ast$.
The unique solution $H$ to the equation \eqref{soluniq}  is given implicitly by
 \begin{equation}\label{equati} \log t+\frac{1}{2}\log\left|A\right|=c_1,\end{equation} where
$$A=2  \sinh^{-1}\left[\frac{\kappa H[t]}{ t}\right]-2 c_0 +2\kappa \frac{H[t]}{t} \sqrt{1+\frac{\kappa^2 H[t]^2}{t^2}},$$
and $c_1$ is a constant depends on $R$ and $R_*$.
Now taking into account the condition  $H(1)=1$ and $H(R)=R_\ast$ we get
\begin{equation}\label{coo}c_0=\frac{-\kappa \sqrt{1+\kappa^2}+\kappa  R_\ast \sqrt{{R^2+\kappa^2 R_\ast^2}}-\sinh^{-1}[\kappa]+r^2 \sinh^{-1}\left[\frac{\kappa R_\ast}{R}\right]}{R^2-1}\end{equation} and
$$c_1=\frac{1}{2} \log\left|B\right|,$$ where
$$B=\frac{2R \left(\kappa \left(\sqrt{1+\kappa^2} R-R_\ast \sqrt{1+\frac{\kappa^2 R_\ast^2}{R^2}}\right)+
R \sinh^{-1}\left[{\kappa}{}\right]-R \sinh^{-1}\left[\frac{\kappa R_\ast}{ R}\right]\right)}{-1+R^2}.$$
Then
$$|A|=\left\{
        \begin{array}{ll}
          -A, & \hbox{if $R_\ast\ge R$;} \\
          A, & \hbox{if $R_\ast<R$.}
        \end{array}
      \right.
$$
and
$$|B|=\left\{
        \begin{array}{ll}
          -B, & \hbox{if $R_\ast\ge R$;} \\
          B, & \hbox{if $R_\ast<R$.}
        \end{array}
      \right.
$$
since $$\kappa \left(\sqrt{1+\kappa^2}-a \sqrt{1+a^2 \kappa^2}\right)+ \sinh^{-1}\left[{\kappa}{}\right]-\sinh^{-1}\left[{a \kappa}{}\right]\ge 0 $$ if and only if $a=\frac{R_\ast}{R}\le 1.$

Thus \eqref{equati} can be written as
\begin{equation}\label{spsp}\begin{split}&    \sinh^{-1}\left[\frac{\kappa H[t]}{ t}\right]+
2{\kappa} \frac{H[t]}{t} \sqrt{1+\frac{\kappa^2 H[t]^2}{t^2}}=\Psi(t):=\frac{U+V}{(R^2-1)t}\end{split},\end{equation}
where $$U=\kappa \left(\sqrt{1+\kappa^2} R^2-\sqrt{1+\kappa^2} t^2+R R_\ast \sqrt{1+\frac{\kappa^2 R_\ast^2}{R^2}} \left(-1+t^2\right)\right)$$
and
$$V= (R-t) (R+t) \sinh^{-1}\left[\kappa\right]+2 R^2 \left(-1+t^2\right) \sinh^{-1}\left[\frac{\kappa R_\ast}{ R}\right].$$
Let $L(t)=\frac{H(t)}{t}$ and let $\psi(x)=\kappa\text{  }x \sqrt{1+\kappa^2 x^2}+\text{  }\sinh^{-1}\left[\kappa x\right]$.
According to \eqref{spsp}, we see that
\[\begin{split}&L'(t)\psi'(L(x))=\Psi'(t)\\&=\frac{ R \left(2\kappa \left(R_\ast \sqrt{1+\frac{\kappa^2 R_\ast^2}{R^2}}\right)-
 R \sinh^{-1}\left[\kappa\right]-\sqrt{1+\kappa^2} R+2 R \sinh^{-1}\left[\frac{\kappa R_\ast}{ R}\right]\right)}{\left(R^2-1\right) t^3}.\end{split}\]
Since $\psi'(x)>0$, it follows that $L'(t)\ge 0$ if and only if
$$\kappa \left(-\sqrt{1+\kappa^2}+a \sqrt{1+a^2 \kappa^2}\right)-\sinh^{-1}\left[\kappa\right]+
 \sinh^{-1}\left[{a \kappa}{}\right]\ge 0,$$ where $a=\frac{R_\ast}{R}$. Thus $L'(t)\ge 0$
for $1\le t\le R$ if and only if $a\ge 1$ (i.e. $R_\ast\ge R$).
Similarly, $L'(t)<0$ iff $a<1$ (i.e. $R_\ast<R$).
In both cases the diffeomorphic solution is $$H(t)=t\cdot \psi^{-1}(\Psi(t)).$$
Now the condition \eqref{con}, in view of \eqref{coo} can be written as
$$\frac{-\kappa \sqrt{1+\kappa^2}+\kappa R R_\ast \sqrt{1+\frac{\kappa^2 R_\ast^2}{R^2}}-\sinh^{-1}\left[\kappa\right]+
R^2 \sinh^{-1}\left[\frac{\kappa R_\ast}{ R}\right]}{-1+R^2}\geq  \sinh^{-1}\left[\kappa\right]$$
or what is the same
\begin{equation}\label{davidn12}{ \frac{ R_\ast}{R} \sqrt{1+\kappa^2\frac{ R_\ast^2}{R^2}}-\frac{\sqrt{1+\kappa^2}}{R^2}\ge\frac{1}{\kappa} \left(\sinh^{-1}\left[\kappa\right]-
\sinh^{-1}\left[\frac{\kappa R_\ast}{ R}\right]\right)}.
\end{equation}
Thus  \begin{equation}\label{rcirci}R_\circ=R_\circ(R,\kappa)\end{equation} is the unique solution of the equation
\begin{equation}\label{davidn13}{ \frac{ R_\circ}{R} \sqrt{1+\kappa^2\frac{ R_\circ^2}{R^2}}-\frac{\sqrt{1+\kappa^2}}{R^2}=\frac{1}{\kappa} \left(\sinh^{-1}\left[\kappa\right]-
\sinh^{-1}\left[\frac{\kappa R_\circ}{ R}\right]\right)}.
\end{equation}
Let $\kappa$ tends to $0^+$. By using \eqref{davidn12}, we obtain the inequality
$$R_\ast\ge R_\circ=\frac{1+R^2}{2R}$$ which is the standard Nitsche inequality.

Let $\kappa$ tends to $+\infty$. Then \eqref{davidn12} reduces to the inequality $$R_\ast\ge R_\circ=1,$$ which means that no Nitsche phenomenon occurs (as was shown in \cite{annalen}).

Based on the above facts, we can obtain the following Proposition \ref{Zhu-Oct-30}.
\begin{proposition}\label{Zhu-Oct-30}
Let $\kappa>0$.  Then for every $R_\ast,R>1$ there is a diffeomorphic increasing solution $H$ of the ODE \eqref{soluniq} that maps $[1,R]$ onto $[1,R_\ast]$ if and only if $R_\ast$ and $R$ satisfy \eqref{davidn12}. In particular, for every $R_\ast\ge R$, \eqref{davidn12} holds true.
\end{proposition}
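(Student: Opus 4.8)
The plan is to read off the statement from Theorem~\ref{mainth} together with the explicit integration of \eqref{soluniq} already carried out for $\Phi=\Phi_\kappa$; the one extra ingredient needed is an elementary monotonicity remark that converts the inequality $R_\ast\ge R_\circ$ into \eqref{davidn12} and settles the ``in particular'' clause. First I would record that $\Phi_\kappa$ satisfies the hypotheses of Theorem~\ref{mainth} with $\chi(0)=\alpha=1/(2\kappa^2)>0$, so a diffeomorphic increasing $H:[1,R]\to[1,R_\ast]$ solving \eqref{soluniq} exists if and only if $R_\ast\ge R_\circ$, with $R_\circ\in(1,R)$; moreover, through the substitution $s=H(t)/t$, $\dot H=v(s)/s$ and the explicit $v$ computed above, $R_\circ$ is the value of $R_\ast$ for which the corresponding solution has $\dot H(1)=0$, i.e. $c_0=\sinh^{-1}[\kappa]$ in \eqref{coo}, which is precisely equation \eqref{davidn13}.

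Next I would check that $\dot H>0$ on all of $[1,R]$ is faithfully captured by \eqref{con}: since $L(t)=H(t)/t$ is monotone on $[1,R]$ (by \eqref{lcon}) and $s\mapsto c_0-\sinh^{-1}[\kappa s]$ is decreasing, the sign of $\dot H(t)=v(L(t))/L(t)$ is controlled by $v$ at the largest value of $L$, which is $s=1$ when $R_\ast\le R$ and gives exactly \eqref{con}; for $R_\ast>R$ the largest value is $s=R_\ast/R$, but then $R_\ast>R>R_\circ$ (once $R_\circ<R$ is established below), so Theorem~\ref{mainth} already provides a diffeomorphic solution and the a priori sharper endpoint condition holds automatically. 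Expressing \eqref{con} through \eqref{coo} then turns $R_\ast\ge R_\circ$ into \eqref{davidn12}, provided the relevant function is monotone.

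To obtain that monotonicity I would introduce
\[
\Theta(\rho)=\frac{\rho}{R}\sqrt{1+\frac{\kappa^2\rho^2}{R^2}}-\frac{\sqrt{1+\kappa^2}}{R^2}-\frac1\kappa\Big(\sinh^{-1}[\kappa]-\sinh^{-1}\Big[\frac{\kappa\rho}{R}\Big]\Big),
\]
so that \eqref{davidn12} reads $\Theta(R_\ast)\ge0$ and \eqref{davidn13} reads $\Theta(R_\circ)=0$. A one-line differentiation gives $\Theta'(\rho)=\frac{2}{R}\sqrt{1+\kappa^2\rho^2/R^2}>0$, so $\Theta$ is strictly increasing on $(0,\infty)$; hence $\Theta(R_\ast)\ge0\iff R_\ast\ge R_\circ$, and combined with the first step this proves the equivalence asserted in the Proposition. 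For the final assertion I would evaluate $\Theta$ at $\rho=R$: the two $\sinh^{-1}$ terms cancel, leaving $\Theta(R)=\sqrt{1+\kappa^2}\,(1-R^{-2})>0$ since $R>1$; thus $R_\circ<R$, and every $R_\ast\ge R$ satisfies $\Theta(R_\ast)\ge\Theta(R)>0$, i.e. \eqref{davidn12} holds, in fact strictly. The step requiring the most care is the reconciliation in the second paragraph of ``$\dot H>0$ on $[1,R]$'' with the pointwise condition \eqref{con}; but it reduces to the two monotonicities just noted, and the bulk of the work---the explicit solution of \eqref{soluniq}---has already been done before the Proposition.
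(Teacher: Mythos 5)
Your argument is correct, and at its core it uses the same ingredients as the paper's derivation preceding the Proposition: the explicit integration of \eqref{soluniq} through $s=H(t)/t$, the formula \eqref{coo} for $c_0$, and the identification of the sign condition on $\dot H$ with \eqref{davidn12}, with $R_\circ$ pinned down by $\dot H(1)=0$, i.e.\ by \eqref{davidn13}. You assemble these differently, though: instead of reading existence directly off the explicit map $H(t)=t\,\psi^{-1}(\Psi(t))$ as the paper does, you import the equivalence ``existence $\Leftrightarrow R_\ast\ge R_\circ$'' from Theorem~\ref{mainth} (legitimate, since $\Phi_\kappa$ meets its hypotheses with $\chi\equiv 1/(2\kappa^2)>0$), and then convert $R_\ast\ge R_\circ$ into \eqref{davidn12} by the strict monotonicity of your auxiliary function $\Theta$; the computation $\Theta'(\rho)=\tfrac{2}{R}\sqrt{1+\kappa^2\rho^2/R^2}>0$ is correct and is exactly the detail the paper leaves implicit when it calls $R_\circ$ ``the unique solution'' of \eqref{davidn13}, while $\Theta(R)=\sqrt{1+\kappa^2}\,\bigl(1-R^{-2}\bigr)>0$ settles both $R_\circ<R$ and the ``in particular'' clause. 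Your second paragraph also patches a point the paper glosses over: \eqref{con} is the condition $v\ge 0$ only at $s=1$, and it is equivalent to $\dot H\ge 0$ on all of $[1,R]$ only when $R_\ast\le R$ (where $L(t)=H(t)/t$ is decreasing); for $R_\ast>R$ your appeal to Theorem~\ref{mainth} together with uniqueness of the boundary value problem is sound, and alternatively a direct check from \eqref{coo} shows $c_0\ge\sinh^{-1}\bigl[\kappa R_\ast/R\bigr]$ whenever $R_\ast\ge R$. The only loose end, inherited from the paper itself, is the borderline case $R_\ast=R_\circ$, where $\dot H(1)=0$ and ``diffeomorphic'' must be understood on the open annulus.
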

By using Theorem \ref{thew}, we have
\begin{theorem}
The minimum of energy $\mathcal{E}_\kappa:=\mathcal{E}_{\Phi_\kappa}$ within the class $\mathcal{F}(\A,\A_\ast)$ is attained for a radial map $h(x)=H(|x|)\frac{x}{|x|}$ if and only if   \begin{equation}\label{davidn21}R_\ast\ge R_\circ.
\end{equation}
The minimum is unique up to the rotation of annuli.
\end{theorem}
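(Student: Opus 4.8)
The plan is to obtain this statement as the specialization of Theorem~\ref{thew} to the function $\Phi=\Phi_\kappa(t)=\alpha+\beta t+\kappa^2 t^2$, glued together with the explicit determination of the Nitsche threshold carried out above. First I would check that $\Phi_\kappa$ satisfies the hypotheses of Theorems~\ref{mainth} and~\ref{thew}: it is of class $C^\infty(0,\infty)$, positive on $(0,\infty)$ for admissible $\alpha,\beta\ge 0$, and convex since $\ddot\Phi_\kappa\equiv 2\kappa^2>0$; hence $\chi=1/\ddot\Phi_\kappa\equiv 1/(2\kappa^2)$ is constant, so it and its (vanishing) derivative extend continuously to $[0,\infty)$ with $\chi(0)=1/(2\kappa^2)>0$. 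Thus we are in the case $\alpha>0$ of Theorem~\ref{mainth}, which yields a threshold $R_\circ\in(1,R)$, and Theorem~\ref{thew} then asserts that $\mathcal{E}_{\Phi_\kappa}$ attains its minimum over $\mathcal{F}(\A,\A_\ast)$ — necessarily at the radial diffeomorphism $h=h_\lambda$, $h(x)=H_\lambda(|x|)x/|x|$ — precisely when $R_\ast\ge R_\circ$.

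It then remains to identify this abstract $R_\circ$ with the quantity $R_\circ(R,\kappa)$ of~\eqref{rcirci}--\eqref{davidn13}. Following the computations of this section, the auxiliary equation~\eqref{dots} for $\Phi_\kappa$ integrates explicitly to $v(s)=\dfrac{s\left(c_0-\sinh^{-1}(\kappa s)\right)}{\kappa\sqrt{1+\kappa^2 s^2}}$, and the corresponding solution of the Euler--Lagrange equation~\eqref{soluniq} is the diffeomorphism $H(t)=t\,\psi^{-1}(\Psi(t))$ from~\eqref{spsp}, with $c_0$ fixed by the boundary data $H(1)=1$, $H(R)=R_\ast$ via~\eqref{coo}. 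The abstract $R_\circ=H_\circ(R)$ of Theorem~\ref{mainth} is the value of $R_\ast$ for which $\dot H(1)=\lambda=0$, i.e. $v(1)=0$, i.e. $c_0=\sinh^{-1}(\kappa)$; substituting $c_0=\sinh^{-1}(\kappa)$ into~\eqref{coo} produces exactly~\eqref{davidn13}. Moreover, as recorded in Proposition~\ref{Zhu-Oct-30}, the admissibility constraint $\dot H\ge 0$ (equivalently $c_0\ge\sinh^{-1}(\kappa)$, which is~\eqref{con}) is equivalent to the inequality~\eqref{davidn12}, the left-hand side of~\eqref{davidn13} is strictly increasing in $R_\ast$, so~\eqref{davidn12} holds if and only if $R_\ast\ge R_\circ$ and~\eqref{davidn13} has a unique root. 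Combining this equivalence with Theorem~\ref{thew} gives the ``if and only if'' in the statement, with $\mathcal{E}_\kappa=\mathcal{E}_{\Phi_\kappa}$ and $R_\circ$ as in~\eqref{rcirci}.

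For the uniqueness clause I would argue as follows. When $R_\ast\ge R_\circ$, Theorem~\ref{thew} forces \emph{every} minimizer in $\mathcal{F}(\A,\A_\ast)$ to be the radial map $h_\lambda$, and Theorem~\ref{mainth} — specifically the uniqueness of the solution $H$ of~\eqref{zhu-Oct-26-1} with $\dot H>0$, equivalently the uniqueness of $\lambda\in[0,\infty)$ — shows this radial solution is itself unique. On the other hand $\|D(\cdot)\|$ and $\det D(\cdot)$ are invariant under composition with orthogonal maps on either side, so $\mathcal{E}_\kappa[U\circ h_\lambda\circ V]=\mathcal{E}_\kappa[h_\lambda]$ for all rotations $U,V$ of the annuli, and each such rotate is again a minimizer. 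Hence the set of minimizers is exactly the orbit of $h_\lambda$ under these rotations, which is the asserted uniqueness up to rotation of annuli.

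The main obstacle is the middle step: reconciling the abstract threshold $R_\circ=H_\circ(R)$ coming from the ODE‐theoretic Theorem~\ref{mainth} with the explicit transcendental equation~\eqref{davidn13}, and verifying that~\eqref{davidn12} is genuinely monotone in $R_\ast$ so that it carves out a half-line $[R_\circ,\infty)$ and~\eqref{davidn13} has a single solution. Everything else is a bookkeeping assembly of results already established in the excerpt.
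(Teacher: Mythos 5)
Your proposal is correct and follows essentially the same route as the paper: the paper's proof is literally the one-line invocation of Theorem~\ref{thew} applied to $\Phi_\kappa$, combined with the Section~4.1 computation that identifies the threshold with the solution of \eqref{davidn13} via the condition $c_0=\sinh^{-1}(\kappa)$ (i.e.\ $\dot H(1)=0$), which is exactly your middle step. Your added checks (hypotheses of Theorems~\ref{mainth}--\ref{thew} for $\Phi_\kappa$, monotonicity of \eqref{davidn13} in $R_\ast$, rotation invariance of the energy for the uniqueness clause) are just the details the paper leaves implicit.
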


The following figure shows the graphic of the function $R_\circ=R_\circ(R)$ for different $\kappa$

\begin{figure}[htp]\label{f1}
\centering
\includegraphics{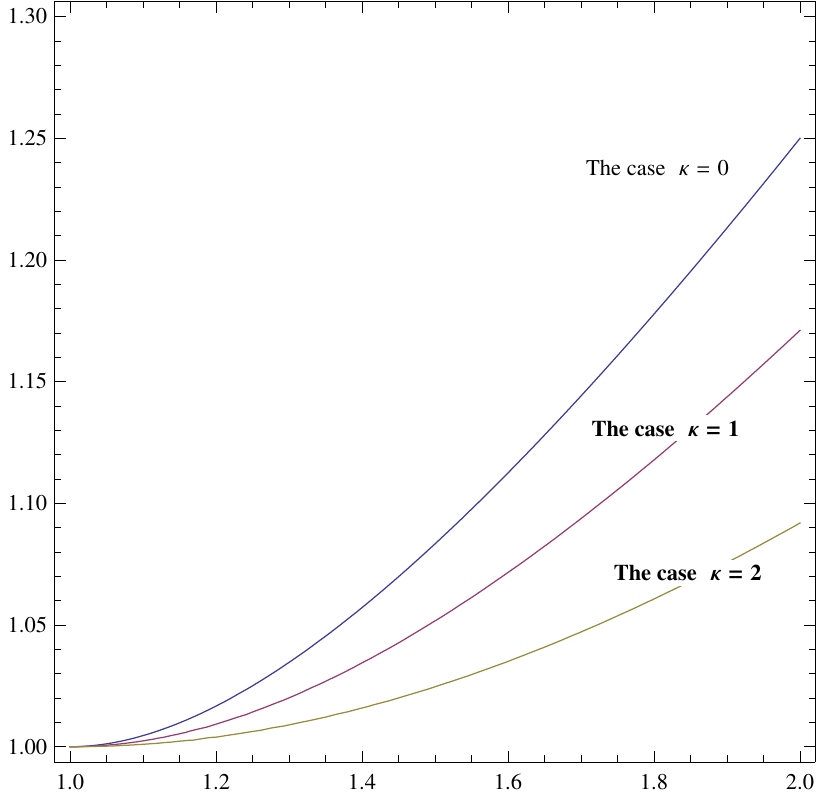}
\caption{The graph of $R_\circ=R_\circ(R)$ tends to the line $R_\circ=1$ when $\kappa\to \infty$}
\end{figure}

\subsection{Application of comparison theorem}
\begin{lemma}\label{mica}
Assume that
$$\dot u(s)=A(u(s),s) \ \ \ \text{and } \ \ \dot v(s)\le A(v(s),s).$$
 Then \begin{equation}\label{important}v(s)\ge u(s)  \ \ \ \ \text{for} \ \  s<1 \ \ \text{and}\ \ \  v(1)\ge u(1)\end{equation} and   \begin{equation}\label{important1}v(s)\le u(s)  \ \ \ \ \text{for} \ \  s>1 \ \ \text{and}\ \ \  v(1)\le u(1)\end{equation} provided that $A$ is Lipschitz continuous.
\end{lemma}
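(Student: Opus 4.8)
The plan is a textbook differential-inequality (Gronwall) argument, carried out symmetrically for the two assertions. I would set $w(s)=v(s)-u(s)$; subtracting $\dot u=A(u,s)$ from $\dot v\le A(v,s)$ gives $\dot w(s)\le A(v(s),s)-A(u(s),s)$, and, with $L$ a Lipschitz constant for $A$ in its first variable, $A(v(s),s)-A(u(s),s)\le L\abs{w(s)}$, so that $\dot w(s)\le L\abs{w(s)}$ on the common interval of definition. The point to exploit is that on any subinterval where $w$ keeps a fixed sign this is a genuine linear differential inequality, integrable against the exponential factor $e^{\mp Ls}$.

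For the claim $v\le u$ on $\{s>1\}$ I would assume $w(1)\le 0$ and argue by contradiction: if $w(s^*)>0$ for some $s^*>1$, set $s_0=\sup\{s\in[1,s^*]:w(s)\le 0\}$; continuity forces $1\le s_0<s^*$, $w(s_0)=0$ and $w>0$ on $(s_0,s^*]$, so $w\ge 0$ on $[s_0,s^*]$, whence $\dot w\le Lw$ and $\frac{d}{ds}\!\left(e^{-Ls}w(s)\right)\le 0$ there. Integrating from $s_0$ to $s^*$ gives $e^{-Ls^*}w(s^*)\le e^{-Ls_0}w(s_0)=0$, contradicting $w(s^*)>0$.

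The claim $v\ge u$ on $\{s<1\}$ I would handle symmetrically: assuming $w(1)\ge 0$ and $w(s^*)<0$ for some $s^*<1$, put $s_0=\inf\{s\in[s^*,1]:w(s)\ge 0\}$, so that $s^*<s_0\le 1$, $w(s_0)=0$, $w<0$ on $[s^*,s_0)$, hence $\dot w\le -Lw$ and $\frac{d}{ds}\!\left(e^{Ls}w(s)\right)\le 0$ there; integrating from $s^*$ to $s_0$ forces $w(s^*)\ge 0$, a contradiction. (One could instead deduce this case from the first by the reflection $\tilde s=2-s$, $\tilde u(\tilde s)=u(2-\tilde s)$, $\tilde v(\tilde s)=v(2-\tilde s)$, which replaces the hypotheses by $\dot{\tilde u}=\tilde A(\tilde u,\tilde s)$, $\dot{\tilde v}\ge\tilde A(\tilde v,\tilde s)$ with $\tilde A(y,\tilde s)=-A(y,2-\tilde s)$ still Lipschitz and exchanges $\{s<1\}$ with $\{\tilde s>1\}$.)

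The only delicate step is the extraction of the ``return point'' $s_0$ at which $w$ comes back to $0$ and the accompanying remark that $w$ keeps a single sign on the ensuing subinterval --- this is exactly what turns the one-sided bound $\dot w\le L\abs w$ into a linear inequality on which the integrating-factor estimate bites. The Lipschitz hypothesis is used only through this bound (and tacitly to guarantee $u,v$ are genuine $C^1$ solutions); continuity of $A$ alone would suffice once existence is granted, at the price of first perturbing $u$ to a solution $u_\varepsilon$ of $\dot u_\varepsilon=A(u_\varepsilon,s)+\varepsilon$ and letting $\varepsilon\to 0^+$.
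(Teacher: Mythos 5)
Your proof is correct and follows essentially the same route as the paper: both argue by contradiction from a first/last crossing point of the difference $w$ and then apply a Gronwall-type estimate based on the Lipschitz bound, you via the integrating factor $e^{\mp Ls}$, the paper by integrating $\dot w/w$ up to the crossing point. You treat both halves of the statement explicitly (the paper writes out only the $s<1$ case, and with some sign typos), but the method is the same.
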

\begin{proof} Let $w(s)=u(s)-v(s)$ and assume that there exists some constant $y<1$ such that $w(y)>0$. Moreover by using the continuity of $w$, we see that there exists a constant $\rho\in[y,1]$ such that $w(\rho)=0$ and $w(x)>0$ for $x\in[y,\rho)$. Let $$\rho=\inf\{d\in [y,1]:w|_{(y,d)}>0\}.$$

Then  for $x\in [y,\rho]$, one has
$$-w'(x)\le A(v(y),y)-A(u(y),y)\le L|v(y)-u(y)|=-Lw(x).$$
Integration on $[y,\sigma]\subset [y,\rho]$ gives

$$-\int_y^{\sigma}\frac{w'(s)}{w(s)}ds\le -L\int_y^{\sigma}ds.$$

This shows that $$\log \frac{w(y)}{w(\sigma)}\le L(y-\sigma)$$ and thus $$ w(y)\le w(\sigma)e^{L(y-\sigma)}.$$
By letting $\sigma$ close to $\rho$ increasingly, one has
 $$ w(y)\le w(\rho)e^{L(y-\rho)}=0$$ which is a contradiction to the condition $w(x)>0$ for $x\in[y, \rho)$.
\end{proof}

Now we are ready to formulate the following proposition.
\begin{proposition}\label{prop}
Assume that $\Phi_1$, $\Phi_2\in C^3(0, \infty)$ are two positive  convex functions which map $(0,\infty)$ into itself.
Assume further that $\ddot\Phi_1(a)\ge \ddot\Phi_2(a)$ for every $a$. If $H_1$ and $H_2$ are solutions of the following ODE
\begin{equation}\label{hdot}\ddot H_i=(H_i-t\dot H_i)\frac{H_i\dot H_i\ddot \Phi_i+2t}{t H_i^2 \ddot \Phi_i+2t^3}, \ \ i=1,2\end{equation}
satisfying the conditions  $H_1(1)=H_2(1), 0<H_1'(1)\le H_2'(1)$, then $$H_1(t)\le H_2(t).$$
\end{proposition}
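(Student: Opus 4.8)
\emph{Proof strategy.}
The plan is to pass to the auxiliary first order equation of Section~\ref{sec2-2}, to compare the two auxiliary solutions by means of Lemma~\ref{mica}, and then to carry the comparison back to $H_1,H_2$. Write $h_0:=H_1(1)=H_2(1)$. By \eqref{lcon} the quantity $H_i-t\dot H_i$ has a fixed sign along each solution of \eqref{hdot}, hence $L_i(t):=H_i(t)/t$ is monotone on the whole interval of existence with $L_i(1)=h_0$; moreover $\dot H_i>0$ throughout, since at a hypothetical first zero $t_1>1$ of $\dot H_i$ one would get $\ddot H_i(t_1)=H_i(t_1)M_i(t_1)>0$, which excludes $t_1$. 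Thus each $H_i$ is either \emph{inelastic} ($\dot H_i(1)<h_0$, $L_i$ strictly decreasing, $H_i(t)<h_0t$ for $t>1$), \emph{conformal} ($\dot H_i(1)=h_0$, $H_i\equiv h_0t$), or \emph{elastic} ($\dot H_i(1)>h_0$, $L_i$ strictly increasing, $H_i(t)>h_0t$ for $t>1$); since $\dot H_1(1)\le\dot H_2(1)$ the regime of $H_1$ is not above that of $H_2$ in the order \emph{inelastic}$\prec$\emph{conformal}$\prec$\emph{elastic}. I would first dispose of the mixed situations: if the regimes differ then $H_1(t)\le h_0t\le H_2(t)$ for $t\ge1$, and if both are conformal then $H_1\equiv H_2$; so only the two cases in which $H_1$ and $H_2$ lie in the same non-conformal regime require work.

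For those two cases the plan is to use the substitution $s=H_i(t)/t$, $\dot H_i(t)=v_i(s)/s$ of Section~\ref{sec2-2}, under which each $H_i$ produces a solution of the auxiliary equation \eqref{dots}, which for $n=2$ reads
\[\dot v=G_i(v,s):=\frac{2\,(v-s^{2})}{s\,\bigl(2+s^{2}\ddot\Phi_i(v)\bigr)}.\]
Here $G_i$ is locally Lipschitz in $v$ (since $\Phi_i\in C^3$ and $2+s^{2}\ddot\Phi_i(v)>0$) and satisfies the growth bound \eqref{bounded}, so by the argument of Claim~1 in the proof of Theorem~\ref{mainth} each $v_i$ extends to a solution on $(0,\infty)$, and $v_i(h_0)=h_0\dot H_i(1)$ gives $v_1(h_0)\le v_2(h_0)$. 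The decisive elementary observation is that, because $\ddot\Phi_1\ge\ddot\Phi_2$ and $s>0$,
\[G_1(v,s)\le G_2(v,s)\ \text{ for } v\ge s^{2},\qquad G_1(v,s)\ge G_2(v,s)\ \text{ for } v\le s^{2},\]
while $v_i(s)-s^{2}=H_i\dot L_i$ has the sign of $\dot L_i$; hence on the interval $J_i:=\{L_i(t):t\ge1\}$, which has $h_0$ as an endpoint, the solution $v_i$ stays strictly below the parabola $v=s^{2}$ in the inelastic case and strictly above it in the elastic case.

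Consequently, in the elastic case $\dot v_1=G_1(v_1,s)\le G_2(v_1,s)$ on $J_1$, so $v_1$ is a subsolution of $\dot v=G_2(v,s)$ there; with $v_1(h_0)\le v_2(h_0)$, Lemma~\ref{mica} (run forward in $s$ from the base point $h_0$) yields $v_1\le v_2$ on $J_1$. In the inelastic case $\dot v_1=G_1(v_1,s)\ge G_2(v_1,s)$ on $J_1$, so $v_1$ is a supersolution of $\dot v=G_2(v,s)$; applying Lemma~\ref{mica} to $-v_1,-v_2$ (run backward in $s$ from $h_0$) again yields $v_1\le v_2$ on $J_1$. In either case $v_1(L_1(t))\le v_2(L_1(t))$ for all $t\ge1$. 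Finally I would transfer this through the recovery equation $\dot H_i=\tfrac{t}{H_i}\,v_i(H_i/t)$ of Section~\ref{sec2-2}: since $t/H_1(t)>0$,
\[\dot H_1(t)=\tfrac{t}{H_1(t)}\,v_1\!\bigl(H_1(t)/t\bigr)\le\tfrac{t}{H_1(t)}\,v_2\!\bigl(H_1(t)/t\bigr)=:F_2\bigl(t,H_1(t)\bigr),\]
so $H_1$ is a subsolution of $\dot H=F_2(t,H)$ whose exact solution is $H_2$; $F_2$ is locally Lipschitz in $H$ because $v_2\in C^1$, and $H_1(1)=H_2(1)$, so the scalar comparison theorem gives $H_1(t)\le H_2(t)$ for every $t\ge1$.

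I expect the only genuinely delicate step to be the bookkeeping around the parabola $v=s^{2}$: the sign of $G_1-G_2$ reverses across it, so one must be sure that the relevant interval $J_i$ lies entirely on the side of $v=s^{2}$ on which the inequality between $G_1$ and $G_2$ points in the right direction -- this is exactly the sign-invariance of $H_i-t\dot H_i$ in \eqref{lcon} -- and then apply Lemma~\ref{mica} in the correct direction within each regime. The mixed-regime cases, by contrast, never touch the auxiliary equation and follow immediately from the position of each $H_i$ relative to the conformal line $h_0t$.
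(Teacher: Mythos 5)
Your proposal is correct and follows essentially the same route as the paper: pass to the auxiliary first-order equation (\eqref{dots}, i.e.\ \eqref{soleq1} for $n=2$), compare $v_1$ and $v_2$ via Lemma~\ref{mica}, and transfer the comparison back through $\dot H_i=\tfrac{t}{H_i}\,v_i(H_i/t)$ with a second application of the comparison lemma. Your additional bookkeeping around the parabola $v=s^2$ and the three regimes is a refinement rather than a different method: the paper asserts $A_2(u,s)\le A_1(u,s)$ without the sign restriction, so your case analysis makes explicit (and justifies) a step the paper's proof glosses over.
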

\begin{proof}

The auxiliary equation is \begin{equation}\label{soleq1}\dot v_i=\frac{2v_i-2s^2}{s^3 \ddot \Phi_i(v_i)+2s}=A_i(v_i(s),s), \ \ i=1,2.\end{equation}
Then
$$A_2(u,s)\le A_1(u,s)$$
which shows that
$$\dot v_2=A_2(v_2(s),s)\le A_1(v_2(s),s).$$
Hence the solutions $v_i$, $i=1,2$ to the equation \eqref{soleq1} according to Lemma~\ref{mica} satisfying the inequality
$$v_1(s)\le v_2(s).$$
Moreover, we have
$$\dot H_i(t)=\frac{t}{H_i(t)}{v_i\left(\frac{H_i(t)}{t}\right)}=\chi_i(t,H_i(t)),  \ \ i=1,2,$$
and by using again Lemma~\ref{mica}, in view of
$$\chi_2(x,y)=\frac{x}{y}v_2\left(\frac{y}{x}\right)\ge \chi_1(x,y)=\frac{x}{y}v_1\left(\frac{y}{x}\right)$$
one has
$$H_1(t)\leq H_2(t).$$
\end{proof}


\begin{corollary}
Assume that $2\kappa^2=\sup\left\{\ddot\Phi(a):a\in(0,\infty)\right\}<\infty$ and assume that there exists an increasing diffeomorphism $H:[1,R]\to [1,R_\ast]$ which solves the ODE \eqref{hdot}.
Then $R_\ast, R$ and $\kappa$ are parameters satisfying the condition \eqref{davidn12}.
\end{corollary}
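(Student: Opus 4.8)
The plan is to feed the hypothesis into the comparison Proposition~\ref{prop}, using the quadratic function as the ``most convex'' competitor, and then to read off the conclusion from the explicit solvability of the quadratic problem.

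First I would set $\Phi_1=\Phi_\kappa$, say $\Phi_\kappa(t)=1+\kappa^2t^2$ (only $\ddot\Phi$ enters \eqref{hdot}, so the lower order terms are irrelevant), and $\Phi_2=\Phi$. Both lie in $C^3(0,\infty)$ and map $(0,\infty)$ into itself, and
$$\ddot\Phi_1\equiv 2\kappa^2=\sup\{\ddot\Phi(a):a\in(0,\infty)\}\ \ge\ \ddot\Phi(a)=\ddot\Phi_2(a)\qquad\text{for every }a,$$
so the hypotheses of Proposition~\ref{prop} are in force for the pair $(\Phi_1,\Phi_2)$. Put $\lambda:=\dot H(1)$; since $H$ is an increasing diffeomorphism of $[1,R]$ onto $[1,R_\ast]$ its derivative cannot vanish, so $\lambda>0$, while $H(1)=1$. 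I would then take $H_1$ to be the solution of the quadratic equation \eqref{soluniq} — equivalently \eqref{hdot} with $\ddot\Phi_1\equiv 2\kappa^2$ — determined by $H_1(1)=1$ and $\dot H_1(1)=\lambda$; by Theorem~\ref{mainth} applied to $\Phi_\kappa$ this is the increasing diffeomorphism $H_\lambda$, whose domain contains $[1,R]$.

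Next, since $H_1(1)=H(1)=1$ and $0<\dot H_1(1)=\lambda=\dot H(1)$, Proposition~\ref{prop} (with $H_2=H$) gives $H_1(t)\le H(t)$ on $[1,R]$, hence $H_1(R)\le H(R)=R_\ast$. On the other hand, the proof of Theorem~\ref{mainth} for $\Phi_\kappa$ shows that $\mu\mapsto H_\mu(R)$ is nondecreasing on $[0,\infty)$ with minimum $H_0(R)=R_\circ=R_\circ(R,\kappa)$, so $H_1(R)=H_\lambda(R)\ge R_\circ(R,\kappa)$. Combining the two estimates, $R_\ast\ge R_\circ(R,\kappa)$. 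Finally I would note that $R_\ast\ge R_\circ(R,\kappa)$ is exactly condition \eqref{davidn12}: the function
$$R_\ast\longmapsto \frac{R_\ast}{R}\sqrt{1+\kappa^2\frac{R_\ast^2}{R^2}}-\frac{\sqrt{1+\kappa^2}}{R^2}-\frac1\kappa\Bigl(\sinh^{-1}[\kappa]-\sinh^{-1}\Bigl[\frac{\kappa R_\ast}{R}\Bigr]\Bigr)$$
is strictly increasing in $R_\ast$ and, by \eqref{davidn13}, vanishes precisely at $R_\ast=R_\circ(R,\kappa)$; so the inequality $R_\ast\ge R_\circ(R,\kappa)$ and \eqref{davidn12} are equivalent (and, by Proposition~\ref{Zhu-Oct-30}, both are equivalent to the existence of a quadratic increasing diffeomorphic solution carrying $[1,R]$ onto $[1,R_\ast]$). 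This completes the argument.

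The hard part will not be the inequality chain — that is a direct application of Proposition~\ref{prop} together with the monotonicity $H_\mu(R)\ge R_\circ$ already recorded for the quadratic problem — but the bookkeeping needed to make sure the hypotheses of Proposition~\ref{prop} are genuinely available: that a $C^\infty$ solution of \eqref{hdot} which is an increasing diffeomorphism of $[1,R]$ onto $[1,R_\ast]$ really has $\dot H(1)>0$, and that the quadratic companion $H_\lambda$ with that same initial slope is defined and remains an increasing diffeomorphism on the whole of $[1,R]$. Both of these follow from the regularity of the solutions and from the analysis of Section~\ref{sec-2} and Theorem~\ref{mainth}, but they ought to be spelled out.
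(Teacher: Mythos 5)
Your proposal is correct and follows essentially the paper's own route: compare $H$ with a solution of the quadratic problem via Proposition~\ref{prop} (taking $\ddot\Phi\le\ddot\Phi_\kappa=2\kappa^2$), deduce $R_\ast\ge R_\circ(R,\kappa)$, and identify this with condition \eqref{davidn12} through \eqref{davidn13}. The only (harmless) difference is bookkeeping: the paper compares $H$ directly with the critical quadratic diffeomorphism onto $[1,R_\circ]$, while you match the initial slope $\lambda=\dot H(1)$ and then invoke the monotonicity of $\mu\mapsto H_\mu(R)$ from the proof of Theorem~\ref{mainth}, which in fact makes the hypothesis $0<\dot H_1(1)\le\dot H_2(1)$ of Proposition~\ref{prop} easier to verify.
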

\begin{proof}

Let $H_\kappa: [1,R]\to [1,R_\ast']$ be a diffeomorphism, where $R_\ast'=R_\ast'(R,\kappa)$ satisfies the equality  \eqref{davidn12}.
Since$$\ddot\Phi(a)\le \ddot\Phi_\kappa(a)=2\kappa^2, $$ it follows from Proposition~\ref{prop}, that $H(t)\ge H_\kappa(t)$ for every $t$. In particular we have the inequality $R_\ast\ge R_\ast'$, and
this implies that the triple $(R_\ast,R,\kappa)$ satisfies the inequality \eqref{davidn12}.
\end{proof}

\vspace*{5mm}
\noindent {\bf Acknowledgments}. The research of the
second author was supported by NSFs of China (No. 11501220), NSFs of Fujian Province (No. 2016J01020),
the Promotion Program for Young and Middle-aged Teacher in Science and Technology Research of Huaqiao University (ZQN-PY402)
and the Program for Innovative Research Team in Science and Technology in Fujian Province University, and Quanzhou High-Level Talents Support Plan under Grant 2017ZT012.

\end{document}